\documentclass[12pt]{amsart}

\usepackage{amsmath,amssymb,lscape}
\usepackage{graphicx}
\usepackage{epstopdf}
\usepackage{enumerate}

\newtheorem{theorem}{Theorem}[section]

\newtheorem{lemma}[theorem]{Lemma}

\theoremstyle{definition}
\newtheorem{definition}[theorem]{Definition}

\theoremstyle{remark}
\newtheorem{remark}[theorem]{Remark}

\newcommand\xqed[1]{%
  \leavevmode\unskip\penalty9999 \hbox{}\nobreak\hfill
  \quad\hbox{#1}}
\newcommand\tqed{\xqed{$\triangle$}}

\numberwithin{equation}{section}

\def\bR{{\mathbb {R}}}

\def\pB{{\mathcal B}}

\def\pL{{\mathcal L}}

\def\pM{{\mathcal M}}
\def\pP{{\mathcal P}}

\def\pS{{\mathcal S}}

\bibliographystyle{alpha}

\begin{document}

\title[Yoshikawa moves on marked graphs via Roseman's theorem]{Yoshikawa moves on marked graphs via Roseman's theorem}
\author{Oleg Chterental}
\email{oleg.chterental@mail.utoronto.ca}
\maketitle
\begin{center}\today\end{center}

\begin{abstract}
Yoshikawa \cite{Yo} conjectured that a certain set of moves on marked graph diagrams generates the isotopy relation for surface links in $\bR^4$, and this was proved by Swenton \cite{S} and Kearton and Kurlin \cite{KK}. In this paper, we find another proof of this fact for the case of $2$-links (surface links with spherical components). The proof involves a construction of marked graphs from branch-free broken surface diagrams, and a version of Roseman's theorem \cite{R} for branch-free broken surface diagrams of $2$-links.
\end{abstract}

\section{Introduction}

For a smooth oriented manifold $M$ we denote by ${\rm Diff}^+(M)$ the group of orientation-preserving diffeomorphisms of $M$ and by ${\rm Diff}_0^+(M)$ the path-component of the identity in ${\rm Diff}^+(M)$. We orient $\bR^4$ in the standard way. A \textbf{surface link} $L \subset \bR^4$ is a smooth submanifold diffeomorphic to a closed surface (i.e. compact with no boundary). We say two surface links $L_1$ and $L_2$ are \textbf{isotopic} if there is $f \in {\rm Diff}_0^+(\bR^4)$ with $f(L_1)=L_2$. A \textbf{$2$-link} is a surface link where each component is a $2$-sphere. Let $\pL$ be the set of surface links, $\pL^+$ the set of surface links with orientable components, and $\pL_0$ the set of $2$-links.

In Section \ref{sectrose} we review the definitions of generic projections, broken surface diagrams, Roseman moves, and Roseman's theorem. We prove Theorem \ref{thmbranch}, a version of Roseman's theorem for branch-free broken surface diagrams of $2$-links, stating that two isotopic branch-free broken surface diagrams of a $2$-link are related by a finite sequence of $Ro1$, $Ro2$, $Ro5^*$, $Ro7$, $Br1$ and $Br2$ moves (see Figures \ref{figrosemove} and \ref{figbranchmoves}). This theorem complements the results of Takase and Tanaka \cite{TT}, who find examples of isotopic branch-free broken surface diagrams of a $2$-link that are not related by $Ro1$, $Ro2$, $Ro5^*$ and $Ro7$ moves alone.

In Section \ref{sectgraph} we review marked graphs in $\bR^3$, marked graph diagrams in $\bR^2$, Yoshikawa moves on marked graph diagrams, $ab$-surfaces obtained from marked graphs, and prove some facts about marked graphs and $ab$-surfaces. In Section \ref{sectproof} we describe a relationship between branch-free broken surface diagrams and $ab$-surfaces, and provide another proof that two marked graph diagrams describe isotopic $2$-links if and only if they are related by a finite sequence of Yoshikawa moves (see Theorem \ref{thmmain}).

\section*{Acknowledgements}
The author wishes to thank Dror Bar-Natan, J. Scott Carter, Micha{\l} Jab{\l}onowski, Seiichi Kamada and Vassily O. Manturov for pointing out some inaccuracies and providing helpful references and comments.

\section{Broken surface diagrams and Roseman's theorem}\label{sectrose}

Definition \ref{defbsd} reviews generic projections, broken surface diagrams and Roseman moves. Roseman's theorem is stated in Theorem \ref{thmrose}. We use Lemma \ref{lembranch} to prove Theorem \ref{thmbranch}, a branch-free version of Roseman's theorem.

\begin{figure}
\begin{center}
\includegraphics[scale=1]{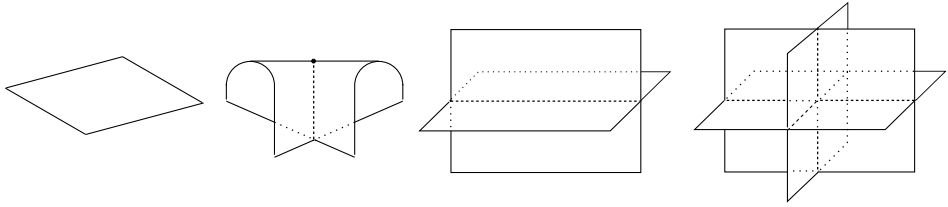}
\caption{Non-singular, branch, double and triple points in a generic projection.}\label{figgenproj}
\end{center}
\end{figure}

\begin{figure}
\begin{center}
\includegraphics[scale=1]{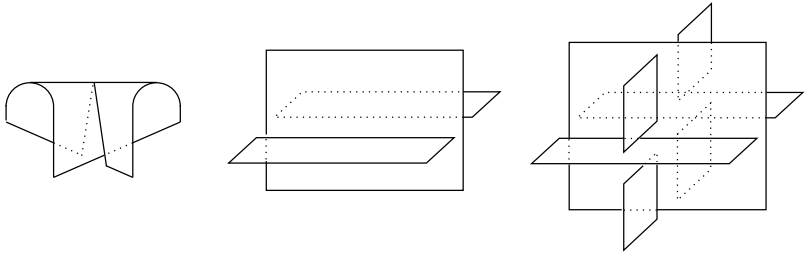}
\caption{Broken surface diagrams of branch, double and triple points.}\label{figbsd}
\end{center}
\end{figure}

\begin{definition}\label{defbsd}
References for the definitions we present here can be found for example in Carter, Kamada and Saito \cite{CKS}, Kamada \cite{Kam} and Roseman \cite{R}. Let $\pi:\bR^4 \rightarrow \bR^3$ be the projection $(x,y,z,u) \mapsto (x,y,z)$. If $L_1 \in \pL$ there is $L_2 \in \pL$ isotopic to $L_1$ such that a neighbourhood in $\bR^3$ of any point of $\pi(L_2)$ is one of the four possibilities in Figure \ref{figgenproj}. Such a projection $\pi(L_2)$ will be called \textbf{generic}.

Assume $L \in \pL$ is such that $D=\pi(L)$ is generic. Let ${\it sing}(D) \subset \bR^3$ be the set of branch, double and triple points. If $p \in {\it sing}(D)$ is a double point then there exist $x_1 \neq x_2 \in L$ with $\pi(x_1)=\pi(x_2)$ and these two points are ordered by the $u$-coordinates of $x_1$ and $x_2$. Similarly if $p \in {\it sing}(D)$ is a triple point then there exist $x_1 \neq x_2 \neq x_3 \in L$ with $\pi(x_1)=\pi(x_2)=\pi(x_3)$ and these three points are ordered by the $u$-coordinates of $x_1$, $x_2$ and $x_3$. The orderings extend by continuity to neighbourhoods of the points in $S$. The projection $D$ along with all of the above crossing information is called a \textbf{broken surface diagram}. Following Takase and Tanaka \cite{TT}, we refer to a broken surface diagram with no branch points as \textbf{branch-free}. We consider two broken surface diagrams \textbf{equivalent} if they differ by the action of ${\rm Diff}_0^+(\bR^3)$ and agree on crossing information. We may indicate the crossing information near a double-point by removing a neighbourhood of one of the pre-images with the convention that missing segments have greater $u$-coordinates as in Figure \ref{figbsd}. Figure \ref{figrosemove} depicts the Roseman moves on broken surface diagrams. Appropriate crossing information should be added for completeness. The move $Ro5^*$ is slightly different but equivalent modulo the $Ro1$ and $Ro2$ moves, to the usual $Ro5$ move. 

If $X$ is a set of surface links let $\pB(X)$ be the set of broken surface diagrams corresponding to links in $X$ with generic projections, and let $\pB^b(X) \subset \pB(X)$ be the subset of branch-free broken surface diagrams.
\tqed
\end{definition}

\begin{theorem}[Roseman \cite{R}]\label{thmrose}
If $D,D' \in \pB(\pL)$ represent isotopic surface links then $D$ and $D'$ are related by a finite sequence of the moves in Figure \ref{figrosemove} and equivalences in $\bR^3$.
\end{theorem}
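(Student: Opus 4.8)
The plan is to realize the given isotopy as a generic one-parameter family of projections and to read off the required moves from the codimension-one transitions occurring in that family. Since $D$ and $D'$ represent isotopic links $L$ and $L'$, there is a smooth path $\iota_t \colon \Sigma \hookrightarrow \bR^4$, $t \in [0,1]$, of embeddings of the abstract surface $\Sigma$ with $\iota_0(\Sigma)=L$ and $\iota_1(\Sigma)=L'$. I would then study the composite family $f_t = \pi \circ \iota_t \colon \Sigma \to \bR^3$, noting that the diagrams $D$ and $D'$ are recovered from the singular-image data of $f_0$ and $f_1$ together with the $u$-coordinate crossing information along the double- and triple-point sets.

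The heart of the argument is a genericity step built on the theory of stable maps of surfaces into $3$-manifolds (Whitney, Thom, Mather). A generic projection is precisely a stable map $\Sigma \to \bR^3$: its only singularities are isolated cross-caps (the branch points), while the multiple-point set consists of double-point curves and isolated triple points, and such stable maps form an open dense subset of $C^\infty(\Sigma,\bR^3)$. Using multijet transversality I would perturb the family $\{f_t\}$, rel its generic endpoints, so that $f_t$ is stable for all but finitely many parameter values $t_1 < \cdots < t_n$, and so that at each $t_i$ the path crosses a single codimension-one stratum transversally.

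On each interval between consecutive exceptional times the family lies in the open stratum of stable maps, so by the Thom first isotopy lemma the projections are ambiently isotopic in $\bR^3$ with continuously varying crossing data; hence over such an interval $D$ changes only by an equivalence in $\bR^3$. At each $t_i$ the local picture is governed by the single codimension-one degeneration being crossed, and these must be matched one-to-one with the Roseman moves of Figure \ref{figrosemove}: the birth or death of a pair of branch points, a branch point passing transversally through a sheet, a tangency of two sheets creating or cancelling a pair of double-point arcs, a sheet passing through a triple point, and the quadruple-point passage — each tracked together with its admissible over/under $u$-coordinate labelling. Stringing these transitions across $[0,1]$ expresses the passage from $D$ to $D'$ as a finite alternation of Roseman moves and $\bR^3$-equivalences.

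The main obstacle is the completeness and correctness of this local classification: one must enumerate \emph{all} codimension-one degenerations of stable maps $\Sigma^2 \to \bR^3$, check that a generic path meets only these strata and only transversally, and verify that each such transition, under every admissible crossing labelling, is realized by exactly one listed move, with none left unaccounted for. Reducing the global isotopy to this finite local list is the role of the transversality and isotopy-lemma framework above; carrying out the classification itself, and confirming that the local models integrate to the advertised moves, is the delicate technical content that Roseman \cite{R} supplies.
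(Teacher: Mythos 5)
The paper does not actually prove this statement: Theorem \ref{thmrose} is quoted as background, with the proof deferred entirely to Roseman \cite{R}, and everything the paper does builds on top of it as a black box. So the relevant comparison is between your outline and Roseman's own argument, and your outline is a faithful summary of exactly that argument: realize the isotopy as a one-parameter family, perturb it to be generic, use stability away from finitely many parameter values (so that on those intervals the diagram changes only by equivalence in $\bR^3$), and identify the finitely many codimension-one transitions with the moves of Figure \ref{figrosemove}.

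As a proof, however, your proposal has a genuine gap, and you name it yourself: the complete enumeration of the codimension-one degenerations of stable maps $\Sigma^2 \to \bR^3$, and the verification that each such transition, under every admissible crossing labelling, is realized by exactly the moves of Figure \ref{figrosemove}, is left as ``the delicate technical content that Roseman \cite{R} supplies.'' That classification is not scaffolding --- it \emph{is} the theorem. The multijet-transversality and Thom-isotopy framework only reduces the global statement to this local classification; with the classification itself taken on citation, the proposal proves Roseman's theorem by invoking Roseman's theorem. A second, smaller issue: you perturb the projected family $f_t = \pi \circ \iota_t$ inside $C^\infty(\Sigma,\bR^3)$, but genericity has to be arranged by perturbing the isotopy $\iota_t$ in $\bR^4$ (or else one must check that the perturbed family of maps to $\bR^3$ still lifts to an ambient isotopy of embeddings with consistent crossing data along the double-point sets); otherwise the chain of moves read off downstairs need not come from the given isotopy upstairs, which is precisely what the theorem asserts.
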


\begin{figure}
\begin{center}
\includegraphics[scale=1]{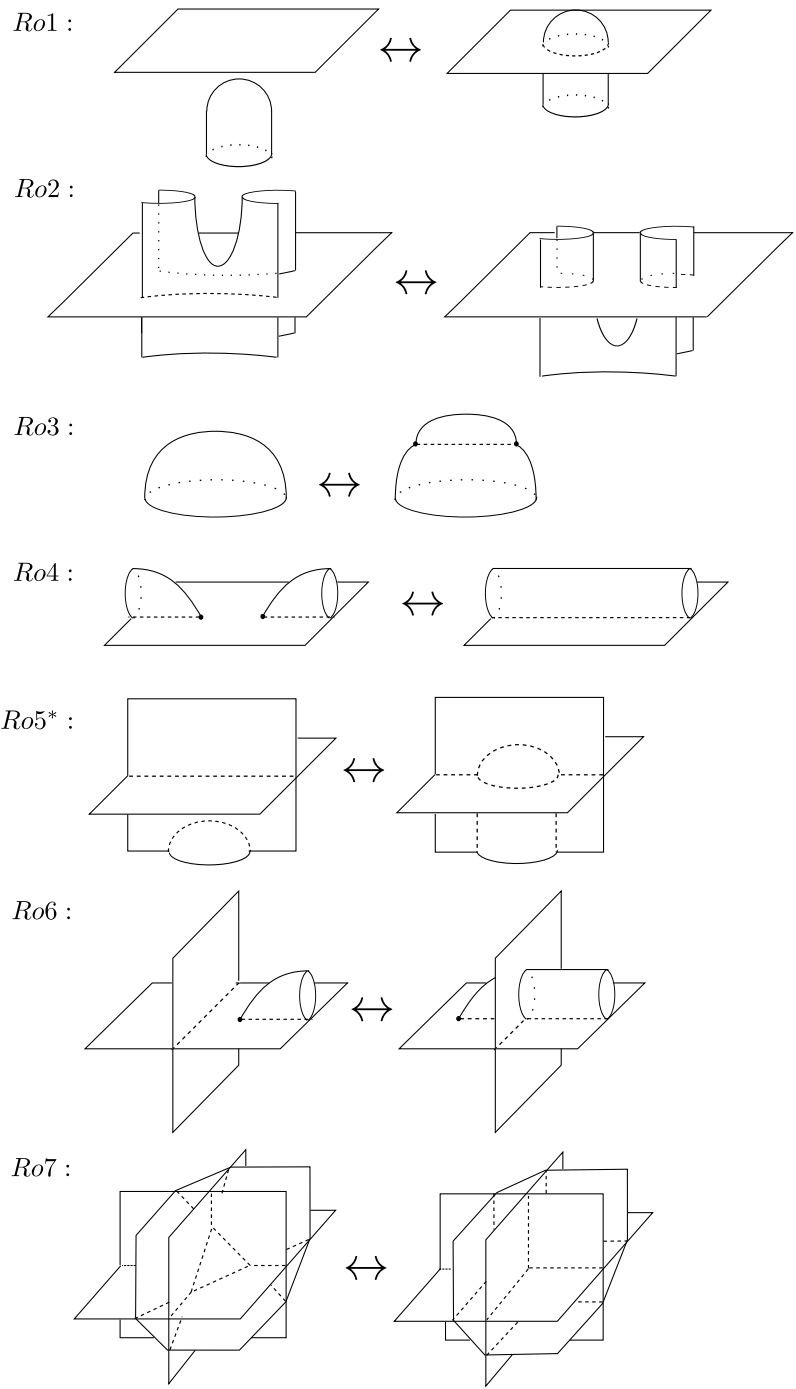}
\caption{Roseman moves on broken surface diagrams, with crossing information suppressed.}\label{figrosemove}
\end{center}
\end{figure}

\begin{figure}
\begin{center}
\includegraphics[scale=1]{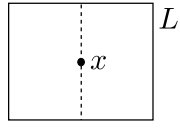}
\caption{The set $\pi^{-1}({\it sing}(\pi(L))) \cap L$ in the neighbourhood of a branch point $x$.}\label{figneighbranch}
\end{center}
\end{figure}

\begin{figure}
\begin{center}
\includegraphics[scale=1]{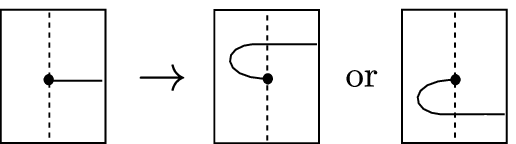}
\caption{Converting an invalid path to a valid path. See Definition \ref{defcancel} and Remark \ref{remdef}.}\label{figmakevalid}
\end{center}
\end{figure}

\begin{remark}
In the paper of Bar-Natan, Fulman and Kauffman \cite{BFK} there is a proof of the well-known fact that all spanning-surfaces of a classical link in $\bR^3$ are tube-equivalent. In that proof, link projections are used to construct Seifert surfaces via Seifert's algorithm, and the main result is deduced by observing how the constructed Seifert surfaces change when Reidemeister moves are performed on the link projection.

We wish to approach the proofs of Theorems \ref{thmbranch} and \ref{thmmain} in a similar manner. We will assign various structures (a set of branch-free broken surface diagrams in the case of Theorem \ref{thmbranch} and an $ab$-surface in the case of Theorem \ref{thmmain}) to a broken surface diagram and observe how these structures change when Roseman moves are performed on the broken surface diagram.
\end{remark}

\begin{definition}\label{defcancel}
Following Carter and Saito \cite{CS}, we define a function ${\it cancel}: \pB(\pL^+) \rightarrow \pP(\pB^b(\pL^+))$ (where $\pP(X)$ is the power set of $X$). Let $D=\pi(L)$ be a broken surface diagram corresponding to a generic projection for some $L \in \pL^+$ with components $L=L_1 \cup \ldots \cup L_r$. If $x \in L$ is such that $\pi(x)$ is a branch point, then a neighbourhood of $x \in \pi^{-1}({\it sing}(D)) \cap L$ looks as in Figure \ref{figneighbranch}. Let $D_i$ be the broken surface diagram obtained from $D$ by removing all components except $L_i$. Each $D_i$ has an equal number of positive and negative branch points, and any pair of positive and negative branch points can be cancelled using an appropriate sequence of $\overrightarrow{Ro6}$ moves followed by an $\overrightarrow{Ro4}$ move.

Specifically, assume $D_i$ has $2b_i \geq 0$ branch points and for $1 \leq i \leq r$ and $1 \leq j \leq b_i$ let $p_{i,j}, q_{i,j} \subset L_i$ be the distinct points such that $\pi(p_{i,j})$ and $\pi(q_{i,j})$ are positive and negative branch points in $D$. A pair $(T=\{\tau_i\}_{1 \leq i \leq r}, V=\bigcup_{1\leq i\leq r,1\leq j \leq b_i}v_{i,j})$ is \textbf{valid} if:
\begin{itemize}
\item[-] each $\tau_i$ is a permutation of $\{1,2,\ldots,b_i\}$,
\item[-] $V \subset L_i$ is a disjoint union of embedded oriented compact intervals $v_{i,j}$ with endpoints $\partial v_{i,j}=\{p_{i,j},q_{i,\tau_i(j)}\}$ and the orientation going from $p_{i,j}$ to $q_{i,\tau_i(j)}$,
\item[-] $V \cap (\pi^{-1}({\it sing}(D)) \cap L)$ is transverse in $L$ and $\pi(V)$ is embedded in $\bR^3$,
\item[-] after performing the sequence of $\overrightarrow{Ro6}$ moves pushing $p_{i,j}$ along $v_{i,j}$ through all intersections in $v_{i,j} \cap (\pi^{-1}({\it sing}(D)) \cap L)$, it is possible to perform a final $\overrightarrow{Ro4}$ move cancelling $p_{i,j}$ and $q_{i,\tau_i(j)}$.
\end{itemize}
The set ${\it cancel}(D)$ contains a broken surface diagram $E(T,V)$ for each valid pair $(T,V)$, obtained by actually performing on $D$ the aformentioned $\overrightarrow{Ro6}$ moves along each $v_{i,j}$ ending in an $\overrightarrow{Ro4}$ move cancelling the branch points $p_{i,j}$ and $q_{i,\tau_i(j)}$. If there are no branch points in $D$ we let ${\it cancel}(D)=\{D\}$.
\tqed
\end{definition}

\begin{remark}\label{remdef}
It is important to note that the final requirement for valid paths in Definition \ref{defcancel} is not superfluous. There exist invalid collections of simple paths satisfying the first three requirements. However if a particular path satisfies the first three requirements but not the fourth, then it can be made valid by the modification in Figure \ref{figmakevalid}. This is depicted on the level of broken surface diagrams in Carter and Saito \cite[Figure K and Figure M]{CS}.
\tqed
\end{remark}

\begin{remark}\label{remmoves}
Figure \ref{figpathmoves} describes $P$ moves on the valid pairs $(T,V)$ of Definition \ref{defcancel}. For each move, the diagram $D=\pi(L)$ remains fixed and the set $\pi^{-1}({\it sing}(D)) \cap L$ thus remains fixed as well. The moves $P2-P6$ correspond to an isotopy of $V$ rel $\partial V$ in $L$, interacting with the set $\pi^{-1}({\it sing}(D)) \cap L$. The moves $P1$ and $P7$ describe two types of interactions of two simple paths in $V$. The $P1$ move acts as a transposition on one of the permutations in $T$. While $P$ moves and valid pairs are somewhat adhoc objects, in Lemma \ref{lembranch} we will see that the moves $P1-P6$ correspond naturally to moves $Br1-Br6$ on broken surface diagrams, described in Figure \ref{figbranchmoves}. Note that each $Br$ move does in fact represent an isotopy in $\bR^4$, since it can be written in terms of Roseman moves, in particular $Ro4$ and $Ro6$ moves.
\tqed
\end{remark}

\begin{figure}
\begin{center}
\includegraphics[scale=1]{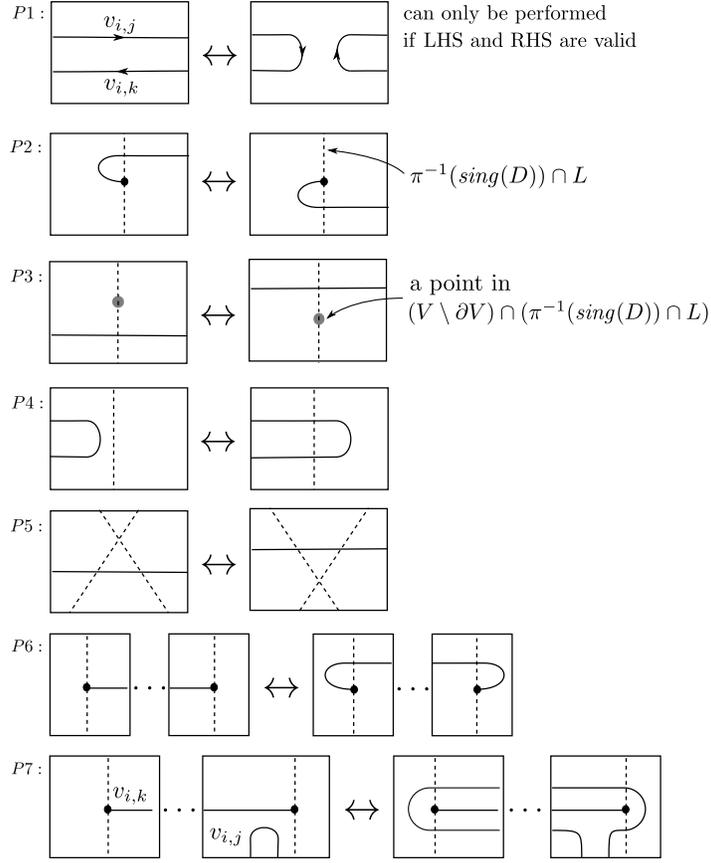}
\caption{Moves on valid pairs $(T,V)$ for some fixed broken surface diagram $D$. See Remark \ref{remmoves}.}\label{figpathmoves}
\end{center}
\end{figure}

\begin{figure}
\begin{center}
\includegraphics[scale=1]{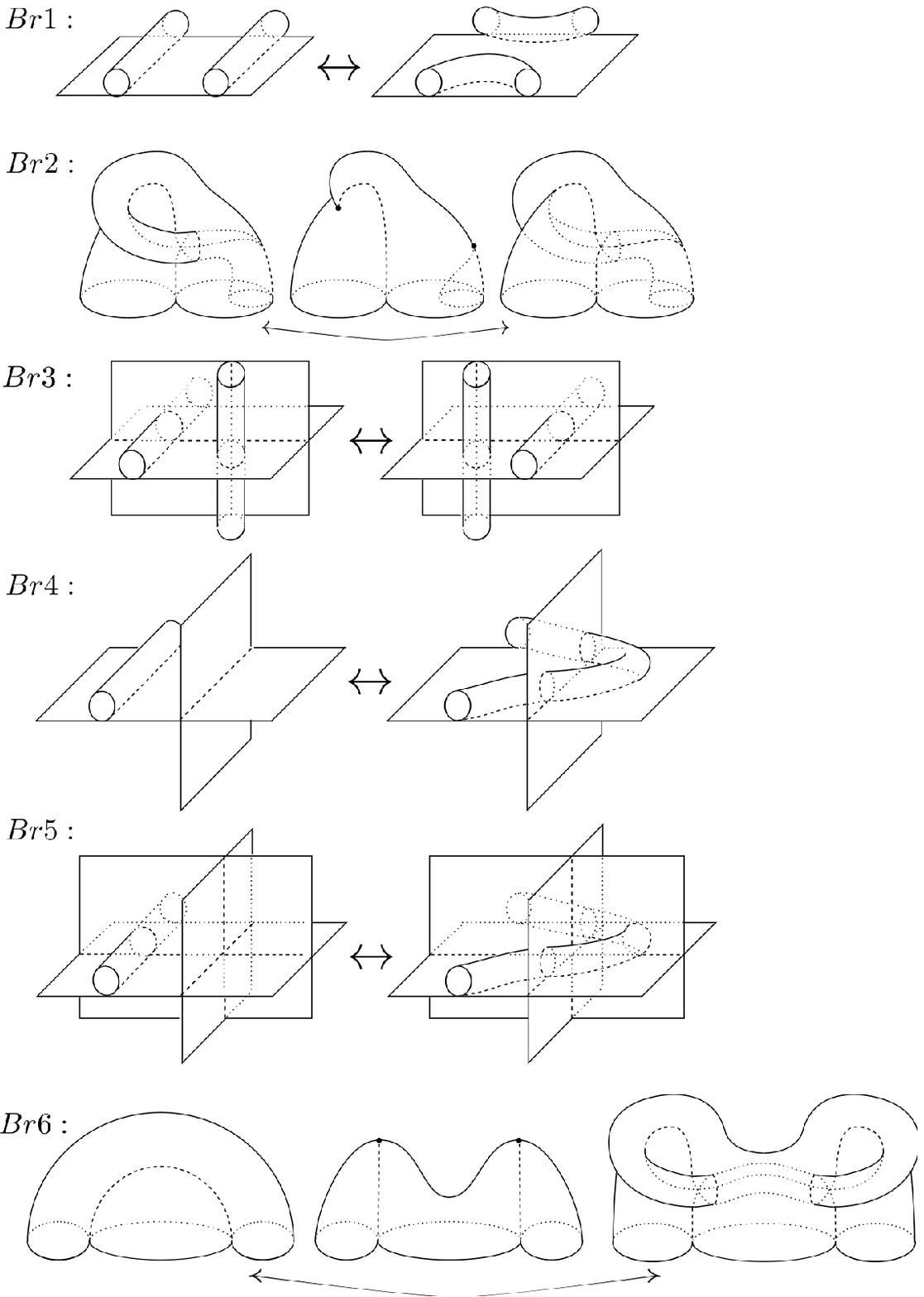}
\caption{$Br$ moves for broken surface diagrams, with crossing information suppressed.}\label{figbranchmoves}
\end{center}
\end{figure}

\begin{lemma}\label{lembranch}
\begin{enumerate}[a)]
\item\label{branch1} If $D \in \pB(\pL_0)$ then all valid pairs $(T,V)$ as in Definition \ref{defcancel} are related by the $P$ moves in Figure \ref{figpathmoves}.
\item\label{branch2} The $Br3$, $Br4$, $Br5$ and $Br6$ moves can be expressed in terms of $Ro1$, $Ro2$, $Ro5^*$, $Ro7$ and $Br1$ moves.
\item\label{branch3} If $D \in \pB(\pL_0)$ and $E_1,E_2 \in {\it cancel}(D)$ then $E_1$ and $E_2$ are related by the $Ro1$, $Ro2$, $Ro5^*$, $Ro7$, $Br1$ and $Br2$ moves in Figures \ref{figrosemove} and \ref{figbranchmoves}.
\item\label{branch4} If $D_1,D_2 \in \pB(\pL_0)$, $E_1 \in {\it cancel}(D_1)$, $E_2 \in {\it cancel}(D_2)$ and $D_1$ and $D_2$ are related by a Roseman move then $E_1$ and $E_2$ are related by a sequence of $Ro1$, $Ro2$, $Ro5^*$, $Ro7$, $Br1$ and $Br2$ moves.
\end{enumerate}
\end{lemma}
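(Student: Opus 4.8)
The four parts are best proved in order, each using its predecessors. For part \ref{branch1}, fix $D \in \pB(\pL_0)$. Because every component $L_i$ is a $2$-sphere and the branch-point set is fixed, a valid pair is the data of a perfect matching of the positive branch points $p_{i,j}$ to the negative ones $q_{i,j}$ together with a disjoint system of embedded cancelling arcs realizing it. I would prove that any valid pair can be brought by $P$ moves to a standard one --- identity permutations together with short parallel arcs inside a small disk of each $L_i$ --- from which it follows that any two valid pairs are $P$-related. The reduction uses $P1$, which acts as a transposition on the permutations $\tau_i$, to standardize the matching, and then the interactions $P2$--$P7$ to remove the intersections of the arcs with $\pi^{-1}({\it sing}(D)) \cap L$ and to unwind the arcs past one another; here genus zero is essential, since it is what lets each arc be unknotted and unlinked within its spherical component. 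Part \ref{branch2} is then a direct broken-surface-diagram computation, exhibiting each of $Br3$--$Br6$ as an explicit finite word in $Ro1, Ro2, Ro5^*, Ro7$ and $Br1$ by displaying the intermediate diagrams, as in Carter and Saito \cite{CS}. Part \ref{branch3} combines these: if $E_1, E_2 \in {\it cancel}(D)$ arise from valid pairs joined by $P$ moves via part \ref{branch1}, the correspondence of Remark \ref{remmoves} converts these into $Br1$--$Br6$ moves (and at most ambient equivalences) on the cancelled diagrams, and rewriting $Br3$--$Br6$ through part \ref{branch2} leaves only $Ro1, Ro2, Ro5^*, Ro7, Br1$ and $Br2$.

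For part \ref{branch4}, I first note that by part \ref{branch3} it suffices to produce a single convenient $E_1^{*} \in {\it cancel}(D_1)$ and a single $E_2^{*} \in {\it cancel}(D_2)$ related by the allowed moves: any other choices are joined to these by allowed moves through part \ref{branch3}, and the relation is symmetric, so the chain $E_1 \to E_1^{*} \to E_2^{*} \to E_2$ finishes the job. The Roseman move relating $D_1$ and $D_2$ is supported in a small ball $R \subset \bR^3$, and I would split into cases by the type of move. If the move is one of the branch-free moves $Ro1, Ro2, Ro5^*, Ro7$, the branch points of $D_1$ and $D_2$ correspond and lie outside $R$; I choose a valid pair whose arcs are routed around $R$ and use the identical data for $D_2$. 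The two cancellations then agree outside $R$ and inside $R$ still differ by the original move, so $E_1^{*}$ and $E_2^{*}$ are related by that single allowed move.

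Every remaining Roseman move, that is, every move not in the allowed list, involves branch points; the relevant models are the branch-point-pair move $Ro4$ and the sheet-passing move $Ro6$, and these I would handle by absorbing the move into the cancellation itself. For $Ro4$, which creates or annihilates a branch-point pair, I extend a valid pair for the diagram with fewer branch points by a short cancelling arc for the new pair that lies in $R$ and meets no other sheet; its terminal $\overrightarrow{Ro4}$ exactly reverses the Roseman move, so that $E_1^{*} = E_2^{*}$. For a move that merely repositions a branch point relative to the other singular sheets, such as $Ro6$, I route the cancelling arc of that branch point so that its first $\overrightarrow{Ro6}$ step undoes the Roseman move before the arc follows the route used for the other diagram, again giving $E_1^{*} = E_2^{*}$.

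The main obstacle is exactly this last family of cases. Since the cancellation procedure of Definition \ref{defcancel} is itself built from $\overrightarrow{Ro4}$ and $\overrightarrow{Ro6}$ moves, I must verify that the modified cancelling arcs remain valid --- in particular the fourth, non-superfluous condition, repairing it by the modification of Remark \ref{remdef} if necessary --- and that the extra $\overrightarrow{Ro4}$ or $\overrightarrow{Ro6}$ steps genuinely undo the Roseman move without disturbing the other cancellations. Establishing this compatibility, together with the routine check that in the branch-free case the arcs can always be pushed off the ball $R$ while remaining valid, is where the substantive work lies.
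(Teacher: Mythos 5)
Your overall architecture matches the paper's: prove \ref{branch1} using the sphere condition plus $P1$ for transpositions, verify \ref{branch2} by explicit diagram computations, combine these for \ref{branch3}, and reduce \ref{branch4} via \ref{branch3} to exhibiting one convenient cancellation of $D_1$ and one of $D_2$ related by allowed moves. Parts \ref{branch1}--\ref{branch3} are essentially the paper's argument. One imprecision in \ref{branch1}: your proposed standard form of ``short parallel arcs inside a small disk'' is not achievable, because the endpoints $p_{i,j}$, $q_{i,j}$ are fixed points of $L$ determined by $D$ and cannot be moved by $P$ moves; you can only normalize to a fixed reference system of arcs with those prescribed endpoints (the paper sidesteps this by comparing two arbitrary valid pairs directly, first making them agree near their endpoints via $P6$, then isotoping rel boundary).

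The genuine gap is in part \ref{branch4}: your case analysis omits $Ro3$. There are two distinct Roseman moves involving the creation of a pair of branch points, $Ro3$ and $Ro4$, and they behave differently under cancellation. For $Ro4$ your argument is correct: a short cancelling arc's terminal $\overrightarrow{Ro4}$ literally inverts the move, so $E_1^{*}=E_2^{*}$ (this is the paper's observation that ${\it cancel}(D_1)\subset{\it cancel}(D_2)$ or vice versa for $Ro4$ and $Ro6$). But for $Ro3$, cancelling the two branch points it creates does \emph{not} reverse the move: after the cancellation one is left with a residual closed double-point circle, so the best one can arrange is $F_1 \in {\it cancel}(D_1)$ and $F_2 \in {\it cancel}(D_2)$ differing by an $Ro1$ move. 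The conclusion still follows since $Ro1$ is in the allowed list, but not by your ``exactly reverses the Roseman move'' argument, which is false here. There is a further subtlety you would also need: the two new branch points can be paired by the cancelling data in two ways, and only one pairing yields the $\overleftarrow{Ro1}$ discrepancy directly; the other requires additional $Br1$ and $Br4$ moves (in the style of Figure \ref{figbr6proof}) before part \ref{branch3} can be invoked. The paper devotes a separate case to exactly this, and your proof is incomplete without it.
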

\begin{proof}
\begin{enumerate}[a)]
\item Assume we have two pairs $(T,V)$ and $(T',V')$ with the same set of permutations $T=T'=\{\tau_i\}_{1 \leq i \leq r}$ and let $V=\{v_{i,j}\}_{1\leq i\leq r,1\leq j \leq b_i}$ and $V'=\{v'_{i,j}\}_{1\leq i\leq r,1\leq j \leq b_i}$. If for some $i,j$ the pairs $v_{i,j}$ and $v'_{i,j}$ do not agree in small enough neighbourhoods of the points $p_{i,j}$ and $q_{i,\tau_i(j)}$ in $L$, then they can be made to agree using a $P6$ move, due to the fact that the pairs $(T,V)$ and $(T,V')$ satisfy the last property in Definition \ref{defcancel}. Thus we assume for each $i,j$ the paths $v_{i,j}$ and $v'_{i,j}$ agree in small enough neighbourhoods of the points $p_{i,j}$ and $q_{i,\tau_i(j)}$ in $L$. Since each component of $L$ is a sphere, there is an isotopy in $L$ taking $v_{i,j}$ to $v'_{i,j}$ relative to their common boundary $p_{i,j}$ and $q_{i,\tau_i(j)}$. During this isotopy, $v_{i,j}$ does not change in a small enough neighbourhood of its boundary. Such an isotopy can be accomplished using the $P2$, $P3$, $P4$, $P5$, and $P7$ moves. Thus after performing each isotopy for all choices of $i,j$ we get a sequence of $P$ moves taking $V$ to $V'$.  

Consider now the case of two pairs $(T,V)$ and $(T',V')$ with $T \neq T'$. It is enough to assume that all permutations in $T$ agree with their respective permutations in $T'$ except for some two permutations that differ by a transposition. We can induce an arbitrary transposition using a $P1$ move. If we wish to perform a $P1$ move between $v_{i,j}$ and $v_{i,k}$ we can use $P4$ moves to bring them into the exact form in the left-hand side of the $P1$ move. Now if the $P1$ move is valid (i.e. lifts to the $Br1$ move in Figure \ref{figbranchmoves}), we can perform the $P1$ move to induce a transposition. If the $P1$ move is not valid, it can be made valid after a $P6$ move along either of the paths. Thus we may induce arbitrary transpositions in the $\tau_i$'s using $P$ moves. 

Thus the $P$ moves suffice to connect all valid pairs $(T,V)$ in Definition \ref{defcancel}.

\item We leave it to the reader to verify that the $Br3$, $Br4$, and $Br5$ moves can be expressed in terms of $Ro1$, $Ro2$, $Ro5^*$ and $Ro7$ moves. Figure \ref{figbr6proof} expresses a $Br6$ move using $Ro1$, $Br1$ and $Br4$ moves.

\begin{figure}
\begin{center}
\includegraphics[scale=1]{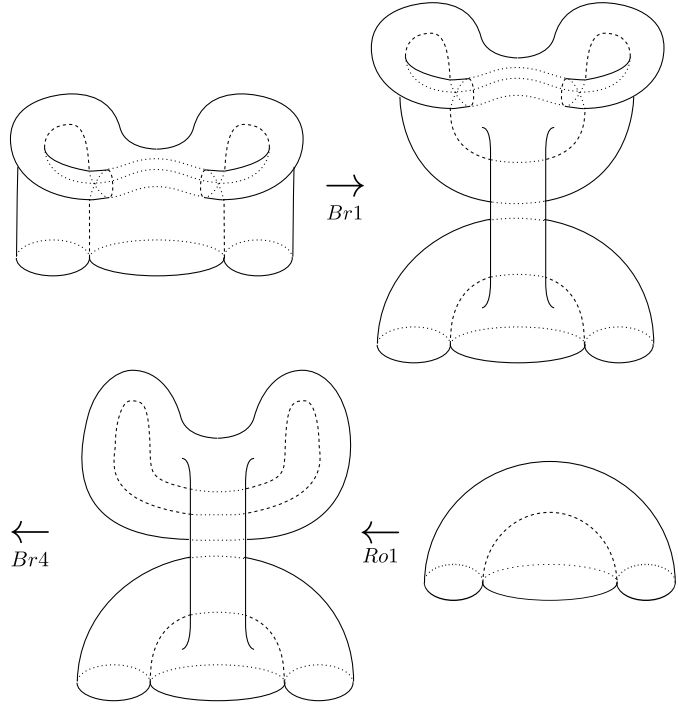}
\caption{Showing that a $Br6$ move can be realized with $Ro1$, $Br1$ and $Br4$ moves.}\label{figbr6proof}
\end{center}
\end{figure}

\item If $E_1,E_2 \in {\it cancel}(D)$ then by Lemma \ref{lembranch}\ref{branch1} their defining valid pairs are related by $P$ moves. The moves $P1$, $P2$, $P3$, $P4$ and $P5$ on valid pairs can be realized directly by the moves $Br1$, $Br2$, $Br3$, $Br4$ and $Br5$ respectively on broken surface diagrams. The move $P6$ can be realized by a sequence involving $Ro1$, $Ro2$, $Ro5^*$, $Ro7$ and $Br6$ moves. The $P7$ move can be realized by a sequence involving $Ro1$, $Ro2$, $Ro5^*$ and $Ro7$ moves. By Lemma \ref{lembranch}\ref{branch2}, the $Br3$, $Br4$, $Br5$ and $Br6$ moves can be expressed in terms of $Ro1$, $Ro2$, $Ro5^*$, $Ro7$ and $Br1$ moves, so we are done.

\item If $D_1$ and $D_2$ are related by an $Ro1$, $Ro2$, $Ro5^*$, or $Ro7$ move, then it is not difficult to see that there are diagrams $F_1 \in {\it cancel}(D_1)$ and $F_2 \in {\it cancel}(D_2)$ such that $F_1$ and $F_2$ are related by an $Ro1$, $Ro2$, $Ro5^*$, or $Ro7$ move. By Lemma \ref{lembranch}\ref{branch3}, $E_1$ is related to $F_1$ and $E_2$ is related to $F_2$ by a sequence of $Ro1$, $Ro2$, $Ro5^*$, $Ro7$, $Br1$ and $Br2$ moves. Thus there is a sequence of $Ro1$, $Ro2$, $Ro5^*$, $Ro7$, $Br1$ and $Br2$ moves relating $E_1$ to $E_2$.

If an $\overrightarrow{Ro3}$ move takes $D_1$ to $D_2$ then there are diagrams $F_1 \in {\it cancel}(D_1)$ and $F_2 \in {\it cancel}(D_2)$ with a $\overleftarrow{Ro1}$ move taking $F_1$ to $F_2$ (note that the two branch points created by the $\overrightarrow{Ro3}$ move can be paired in two obvious ways. One way can be cancelled by an $\overleftarrow{Ro1}$ move and the other can be cancelled by $\overleftarrow{Ro1}$, $Br1$ and $Br4$ moves, similar to Figure \ref{figbr6proof}). Thus by Lemma \ref{lembranch}\ref{branch3} again there is a sequence of $Ro1$, $Ro2$, $Ro5^*$, $Ro7$, $Br1$ and $Br2$ moves relating $E_1$ to $E_2$.

If $D_1$ and $D_2$ are related by an $Ro4$ or an $Ro6$ move, then ${\it cancel}(D_1) \subset {\it cancel}(D_2)$ or ${\it cancel}(D_2)\subset {\it cancel}(D_1)$ and we rely on Lemma \ref{lembranch}\ref{branch3} once more.
\end{enumerate}
\end{proof}

\begin{remark}
It should be stressed that our proof of Lemma \ref{lembranch}\ref{branch1} relies on the fact that in a sphere, any two simple paths with a common boundary are isotopic relative to their common boundary. \tqed
\end{remark}

Now we can prove a branch-free version of Roseman's theorem.

\begin{theorem}\label{thmbranch}
If $D_1,D_2 \in \pB^b(\pL_0)$ are related by a finite sequence of Roseman moves then they are related by a finite sequence of $Ro1$, $Ro2$, $Ro5^*$, $Ro7$, $Br1$ and $Br2$ moves.
\end{theorem}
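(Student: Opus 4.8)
The plan is to reduce everything to Lemma \ref{lembranch}\ref{branch4} by interpolating the given Roseman sequence with branch-free diagrams produced by the ${\it cancel}$ function. Suppose $D_1$ and $D_2$ are joined by a finite sequence of single Roseman moves (possibly interspersed with equivalences in $\bR^3$)
\[
D_1 = G_0 \to G_1 \to \cdots \to G_n = D_2,
\]
where each $G_k \in \pB(\pL_0)$. I would first note that every $G_k$ genuinely lies in $\pB(\pL_0)$: a Roseman move is realized by an isotopy in $\bR^4$, so each $G_k$ is a diagram of a surface link isotopic to the $2$-link represented by $D_1$, hence of a $2$-link itself. This is exactly the hypothesis needed to invoke parts \ref{branch3} and \ref{branch4} of Lemma \ref{lembranch} at each stage.

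Next I would use Definition \ref{defcancel} to choose, for each $k$, a branch-free diagram $E_k \in {\it cancel}(G_k)$. Such a choice exists because each component of $G_k$ carries equally many positive and negative branch points and, by Remark \ref{remdef}, any collection of simple paths satisfying the first three conditions can be promoted to a valid pair; hence ${\it cancel}(G_k) \neq \emptyset$. The key boundary observation is that, since $D_1$ and $D_2$ are already branch-free, Definition \ref{defcancel} forces ${\it cancel}(D_1) = \{D_1\}$ and ${\it cancel}(D_2) = \{D_2\}$, so necessarily $E_0 = D_1$ and $E_n = D_2$ with no freedom of choice at the two ends.

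Then, for each $k$ the diagrams $G_k$ and $G_{k+1}$ are related by a single Roseman move, while $E_k \in {\it cancel}(G_k)$ and $E_{k+1} \in {\it cancel}(G_{k+1})$; Lemma \ref{lembranch}\ref{branch4} supplies a finite sequence of $Ro1$, $Ro2$, $Ro5^*$, $Ro7$, $Br1$ and $Br2$ moves carrying $E_k$ to $E_{k+1}$. Concatenating these sequences over $k = 0, \ldots, n-1$ telescopes into a single finite sequence of the same six moves relating $E_0 = D_1$ to $E_n = D_2$, which is precisely the claim. Any $\bR^3$-equivalences appearing between consecutive $G_k$ are absorbed here, since ${\it cancel}$ is natural with respect to the ${\rm Diff}_0^+(\bR^3)$ action and broken surface diagrams are taken up to equivalence throughout.

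Since the substantive work is already packaged in Lemma \ref{lembranch}, I do not expect a serious obstacle; the argument is essentially a telescoping over the Roseman sequence. The only points demanding care are confirming that ${\it cancel}(G_k)$ is nonempty and that the interpolating diagrams stay inside $\pB(\pL_0)$ so that Lemma \ref{lembranch}\ref{branch4} applies at every step. The genuinely delicate content — that branch-point cancellations can be chosen compatibly across a single Roseman move — has deliberately been isolated in parts \ref{branch3} and \ref{branch4}, so at the level of the theorem it suffices to run the chain.
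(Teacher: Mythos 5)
Your proposal is correct and is essentially identical to the paper's own proof: both interpolate the Roseman sequence with diagrams chosen from ${\it cancel}$ of each intermediate diagram, apply Lemma \ref{lembranch}\ref{branch4} at each step, and conclude using the fact that ${\it cancel}$ of a branch-free diagram is a singleton, so the endpoints are forced to be $D_1$ and $D_2$. The extra remarks you add (nonemptiness of ${\it cancel}(G_k)$, stability of $\pB(\pL_0)$ under Roseman moves, compatibility with $\bR^3$-equivalence) are sound and merely make explicit what the paper leaves implicit.
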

\begin{proof}
Let $D_1=E_1, \ldots, E_n=D_2$ be a sequence of broken surface diagrams with $E_{i+1}$ related to $E_i$ by a Roseman move for $1 \leq i < n$. Choose $F_i \in {\it cancel}(E_i)$ for $1 \leq i \leq n$. By Lemma \ref{lembranch}\ref{branch4}, $F_{i+1}$ is related to $F_i$ by a sequence of $Ro1$, $Ro2$, $Ro5^*$, $Ro7$, $Br1$ and $Br2$ moves, for $1 \leq i < n$. Since ${\it cancel}(E_1)={\it cancel}(D_1)=\{D_1\}$ and ${\it cancel}(E_n)={\it cancel}(D_2)=\{D_2\}$, we are done.
\end{proof}

\section{Marked graphs and $ab$-surfaces}\label{sectgraph}

Lomonaco \cite{L} and Yoshikawa \cite{Yo} described another method of representing surface links via certain $4$-regular rigid vertex graphs in $\bR^3$. Definition \ref{defgraph} reviews marked graphs and the Yoshikawa moves on marked graph diagrams. Definition \ref{defab} concerns $ab$-surfaces and $ab$-moves. Definition \ref{defthickcap} presents the ${\it thicken}$ and ${\it cap}$ functions. Lemma \ref{lembranch} describes some key properties pertaining to marked graphs, $ab$-surfaces and the ${\it thicken}$ and ${\it cap}$ functions.

\begin{definition}\label{defgraph}
A \textbf{marked graph} $G \subset \bR^3$ is a $4$-regular rigid vertex graph (with some components possibly having no vertices) where:
\begin{itemize}
\item[-] the regions in a rigid neighbourhood of each vertex are given a checkboard coloring, which is usually indicated with a line segment, or marker, as in Figure \ref{figmarker},
\item[-] the links $G_a,G_b \subset \bR^3$ obtained by resolving each marked vertex as in Figure \ref{figGlinks}, are trivial.
\end{itemize}
\begin{figure}
\begin{center}
\includegraphics[scale=.75]{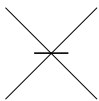}
\caption{A $4$-valent rigid vertex with a marker.}\label{figmarker}
\end{center}
\end{figure}
\begin{figure}
\begin{center}
\includegraphics[scale=.75]{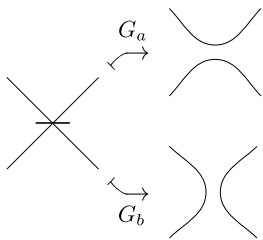}
\caption{The links $G_a,G_b \subset \bR^3$ obtained by resolving all marked vertices in a marked graph $G$.}\label{figGlinks}
\end{center}
\end{figure}
We consider two marked graphs \textbf{equivalent} if they differ by the action of ${\rm Diff}_0^+(\bR^3)$ in a way that preserves rigid neighbourhoods of their vertices, and have agreeing markers. Let $\pM$ be the set of marked graphs. One may project a marked graph to $\bR^2$ and obtain a \textbf{marked graph diagram}. Figure \ref{figyoshimoves} describes the Yoshikawa moves on marked graph diagrams. The ${\it Type \ I}$ moves do not change the equivalence class of the marked graph in $\bR^3$. The results in Kauffman \cite{Kau} show that these first five moves do in fact generate the equivalence relation on marked graph diagrams coming from the action of ${\rm Diff}_0^+(\bR^3)$ on marked graphs in $\bR^3$. The ${\it Type \ II}$ moves differ from ${\it Type \ I}$ moves in that they are defined for marked graphs in $\bR^3$, not just marked graph diagrams in $\bR^2$. \tqed
\begin{figure}
\begin{center}
\includegraphics[scale=1]{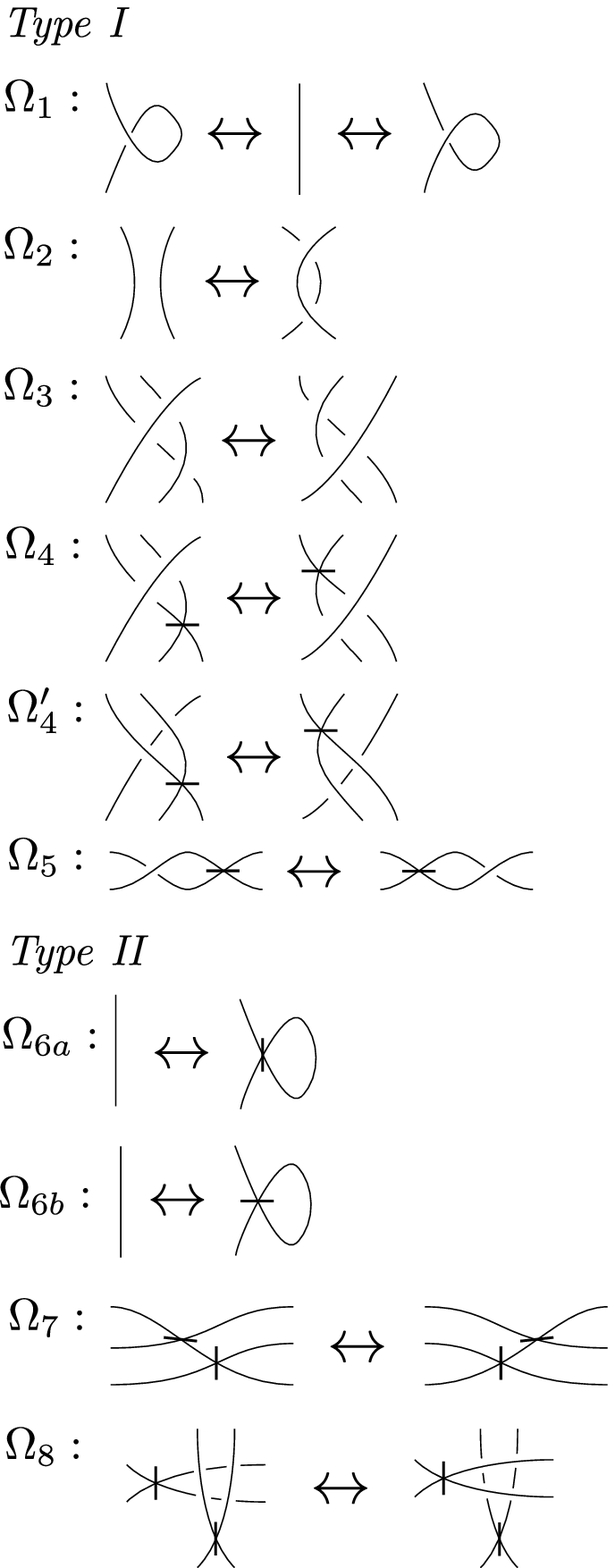}
\caption{Yoshikawa moves on marked graph diagrams.}\label{figyoshimoves}
\end{center}
\end{figure}
\end{definition}

\begin{definition}\label{defab}
An \textbf{$ab$-surface} $R \subset \bR^3$ is a compact not necessarily connected not necessarily orientable surface with boundary where:
\begin{itemize}
\item[-] each boundary component of $R$ is assigned a label from the set $\{a,b\}$,
\item[-] each component of $R$ contains at least one $a$-labelled and at least one $b$-labelled boundary component,
\item[-] the $a$-labelled and $b$-labelled boundary links $\partial_a R, \partial_b R \subset \bR^3$ are trivial.
\end{itemize}
We consider two $ab$-surfaces \textbf{equivalent} if they differ by the action of ${\rm Diff}_0^+(\bR^3)$ and have matching labellings of their boundaries. Let $\pS$ be the set of $ab$-surfaces, $\pS^+$ the subset of orientable $ab$-surfaces, and $\pS_0$ the subset of orientable $ab$-surfaces in which each component has genus $0$. We will refer to the moves on $ab$-surfaces in Figure \ref{figabmoves} as \textbf{$ab$-moves}.
\begin{figure}
\begin{center}
\includegraphics[scale=1]{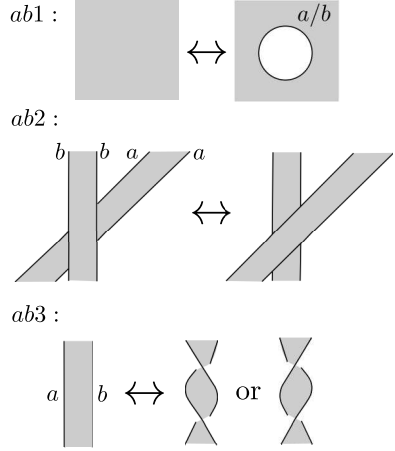}
\caption{$ab$-moves on $ab$-surfaces.}\label{figabmoves}
\end{center}
\end{figure}
\tqed
\end{definition}

\begin{definition}\label{defthickcap}
The function ${\it thicken}:\pM \rightarrow \pS / ab3$, mapping a marked graph to an $ab$-surface up to $ab3$ moves, is given in Figure \ref{figthicken}. Since this is defined locally on the edges and marked vertices of a marked graph, we must glue the final result in a way that preserves the $a/b$-labels coming from marked vertices, and this can only be done up to $ab3$ moves. Note that $G_a=\partial_a ({\it thicken}(G))$ and $G_b=\partial_b ({\it thicken}(G))$ holds for any $4$-regular rigid vertex graph $G$ that satisfies the first condition in Definition \ref{defgraph}. For $R \in \pS$ denote by ${\it thicken}^{-1}(R)$ the set of all $G \in \pM$ with $R \in {\it thicken}(G)$. 

\begin{figure}
\begin{center}
\includegraphics[scale=1]{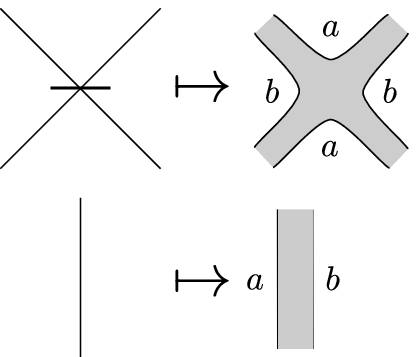}
\caption{The map ${\it thicken}:\pM \rightarrow \pS/ab3$.}\label{figthicken}
\end{center}
\end{figure}

There is a function ${\it cap}: \pS \rightarrow \pL/{\rm Diff}_0^+(\bR^4)$ defined as follows. Given an arbitrary $ab$-surface $R \subset \bR^3$, the boundary of $R$ forms two trivial links $\partial_a R, \partial_b R \subset \bR^3$ (note that we identify $\bR^3$ with $\bR^3 \times \{0\} \subset \bR^4$). Let $\{D_a^i\}_i \subset \bR^3 \times \{1\}$ and $\{D_b^j\}_j \subset \bR^3 \times \{-1\}$ each be a disjoint union of embedded disks with $\pi(\cup_i D_a^i)=\partial_a R$ and $\pi(\cup_j D_b^j) = \partial_b R$ respectively. The union $R \cup ((\partial_a R) \times [0,1]) \cup ((\partial_b R) \times [-1,0])\bigcup_i D_a^i \cup \bigcup_j D_b^j$ is a surface link in $\bR^4$. The isotopy class of this surface link, which we denote ${\it cap}(R)$, is independent of the choice of disk systems $\{D_a^i\}_i$ and $\{D_b^j\}_j$ in $\bR^3\times \{-1,1\}$, see Kamada \cite[Proposition 8.6]{Kam}. Due to this definition, it is reasonable to think of the labels $a$ and $b$ as shorthand for ``above" and ``below" (with respect to the $u$-coordinate).

Given a marked graph $G \in \pM$, the surface link isotopy class associated to $G$ is the isotopy class of ${\it cap}(R)$ for any $R \in {\it thicken}(G)$. We will see in Lemma \ref{lemgraphab}\ref{graphab4} that the ${\it cap}$ function is invariant under all $ab$-moves, so in particular the isotopy class of ${\it cap}(R)$ is the same for all $R \in {\it thicken}(G)$, since all such $R$ are related by $ab3$ moves.
\tqed
\end{definition}

\begin{lemma}\label{lemgraphab}
\begin{enumerate}[a)]
\item\label{graphab1} If $R \in \pS_0$ then ${\it thicken}^{-1}(R)$ is non-empty and any two graphs in ${\it thicken}^{-1}(R)$ are related by $\Omega_7$ moves.
\item\label{graphab2} If $G_1,G_2 \in \pM$ are related by ${\it Type \ II}$ Yoshikawa moves and $R_1 \in {\it thicken}(G_1)$ and $R_2 \in {\it thicken}(G_2)$ then $R_1$ and $R_2$ are related by $ab$-moves.
\item\label{graphab3} If $R_1,R_2 \in \pS_0$ are related by $ab$-moves and $G_1 \in {\it thicken}^{-1}(R_1)$ and $G_2 \in {\it thicken}^{-1}(R_2)$ then $G_1$ and $G_2$ are related by ${\it Type \ II}$ Yoshikawa moves.
\item\label{graphab4} If $R_1,R_2 \in \pS$ are related by $ab$-moves then ${\it cap}(R_1)={\it cap}(R_2)$.
\end{enumerate}
\end{lemma}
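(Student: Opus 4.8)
The four parts share a common character: ${\it thicken}$ (Figure \ref{figthicken}), ${\it cap}$, the Type II Yoshikawa moves (Figure \ref{figyoshimoves}) and the $ab$-moves (Figure \ref{figabmoves}) are all given by explicit local pictures, so each part reduces to building an explicit object from a surface and then checking finitely many local modifications one at a time. The plan is to treat part \emph{a)} as a construction-and-uniqueness statement, parts \emph{b)} and \emph{c)} as the two directions of a correspondence between Type II moves and $ab$-moves, and part \emph{d)} as an invariance statement for ${\it cap}$. Part \emph{a)} is the structural heart, and I expect it to be the main obstacle; once it is in place the remaining parts are organized move-by-move.

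For part \emph{a)}, given $R \in \pS_0$ I would construct a preimage under ${\it thicken}$ by choosing a band decomposition of $R$ matching the local pieces of Figure \ref{figthicken}: realize each component of $R$ (a planar surface, being orientable of genus $0$) as a regular neighbourhood of a $4$-valent graph, placing a marker at each would-be vertex so that the two resolutions of Figure \ref{figGlinks} read off the $a$- and $b$-labelled boundaries. Since $G_a=\partial_a({\it thicken}(G))=\partial_a R$ and $G_b=\partial_b R$ hold by construction, and $\partial_a R,\partial_b R$ are trivial by the definition of an $ab$-surface, the resulting graph $G$ is automatically a valid marked graph, giving non-emptiness of ${\it thicken}^{-1}(R)$. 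For uniqueness, two such graphs differ only through the choices made in the decomposition together with the $ab3$ ambiguity built into ${\it thicken}$. The delicate step, and where genus $0$ is essential, is that any two spines of a planar surface are related by local moves, and I would verify that each such move, as well as the $ab3$ ambiguity, lifts to precisely an $\Omega_7$ move on the marked graph rather than to some larger collection of moves.

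Parts \emph{b)} and \emph{c)} then establish the correspondence between Type II Yoshikawa moves and $ab$-moves. For \emph{b)}, since ${\it thicken}$ is defined locally it suffices to apply it to the two sides of each Type II move in Figure \ref{figyoshimoves} and exhibit a sequence of $ab$-moves relating the resulting local surface pieces; matching the $a/b$ labels across the modification shows the global surfaces $R_1,R_2$ are related by $ab$-moves. For \emph{c)} I would run this in reverse: given an $ab$-move relating $R_1,R_2 \in \pS_0$, I would check that each of $ab1$, $ab2$, $ab3$ lifts to a Type II move on chosen preimages, and then invoke part \emph{a)} (in particular the move $\Omega_7$) to pass between different preimages over a fixed surface, so that the specific graphs $G_1 \in {\it thicken}^{-1}(R_1)$ and $G_2 \in {\it thicken}^{-1}(R_2)$ named in the statement are actually reached.

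Finally, for part \emph{d)} I would verify that each $ab$-move leaves ${\it cap}$ unchanged by exhibiting an ambient isotopy in $\bR^4$ between the two capped surface links. Here the freedom in capping is useful: since ${\it cap}(R)$ is independent of the disk systems $\{D_a^i\}_i$ and $\{D_b^j\}_j$ by Kamada \cite[Proposition 8.6]{Kam}, I may choose the capping disks compatibly with the local picture of each $ab$-move, after which the required $4$-dimensional isotopy is visible as a local saddle or finger move supported near the site of the move. This part should be routine relative to \emph{a)}, since it only requires producing isotopies rather than controlling the set of moves that can arise.
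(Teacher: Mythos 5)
Your parts b), c) and d) track the paper's proof closely. Part b) is exactly the paper's move-by-move check ($\Omega_{6a}/\Omega_{6b}$ induce $ab1$ plus $ab3$, $\Omega_7$ induces $ab3$ only, $\Omega_8$ induces $ab2$ plus $ab3$); part c) follows the paper's strategy of constructing a preimage graph adapted to the site of the $ab$-move and then invoking part a) to reach the specific $G_1,G_2$; and part d) is proved in the paper just as you propose, by exhibiting explicit isotopies in $\bR^4$ for $ab2$ and $ab3$, the $ab1$ case being clear. Two wrinkles in c) that your sketch leaves to ``checking'' but that carry real content in the paper: the $ab3$ move lifts to \emph{no} Yoshikawa move at all --- the paper produces a single graph $G_0$ with $R_1,R_2 \in {\it thicken}(G_0)$, so that only $\Omega_7$ moves from part a) are needed --- and the $ab2$ case requires a genuine case analysis (whether the two paths $p,q$ determined by the move cobound disks with $\partial R_1$, and whether they lie in the same component), the delicate subcase being the construction of an $a$-system containing $p$ whose \emph{dual} $b$-system contains $q$.

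The genuine gap is in part a), which you rightly call the heart. First, nonemptiness is not ``automatic'': realizing $R$ as a regular neighbourhood of an arbitrary $4$-valent graph does not suffice, because the markers must be placeable so that the induced boundary labels agree with the given labelling of $\partial R$ (each boundary component must come out uniformly $a$- or $b$-labelled, and correctly so), and an arbitrary $4$-valent spine need not admit such a placement. Second, for uniqueness, ``any two spines of a planar surface are related by local moves'' is the crux rather than a usable citation: the standard spine moves (edge expansions and contractions) leave the class of $4$-valent marked graphs, so there is nothing to ``lift precisely to $\Omega_7$'' until one has a move calculus internal to that class. The paper solves both problems simultaneously with the notion of an \emph{$a$-system}: disjoint simple closed curves $c_1,\ldots,c_v$ parallel to the $a$-labelled boundary components, joined by $v+f-2$ disjoint arcs cutting $R$ into annuli, each containing one labelled boundary component. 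Graphs built from $a$-systems automatically carry the correct labels (giving nonemptiness), every marked graph embedded in $R$ arises from an $a$-system, the slide move on $a$-systems is exactly the lift of $\Omega_7$, and any two $a$-systems are connected by slide moves via a normal form together with Kamada \cite[Proposition 2.14]{Kam}. You also skip a needed preliminary reduction: an element of ${\it thicken}^{-1}(R)$ is a spatial graph in $\bR^3$, and one must first argue it is equivalent to a graph embedded in $R$ before any two-dimensional spine argument can begin.
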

\begin{proof}
\begin{enumerate}[a)]
\item If $G \in {\it thicken}^{-1}(R)$ then $G$ is equivalent to a graph embedded in $R$. Thus there is no loss in generality restricting ourselves to marked graphs $G$ with $G \subset R$. There is also no loss in generality in assuming $R$ is connected.

Assume $R$ has $a$-boundary components $a_1,\ldots, a_v$ and $b$-boundary components $b_1,\ldots,b_f$ for $v,f \geq 1$. Let $c_1,\ldots,c_v \subset R$ be disjoint simple closed curves with $c_i$ parallel to $a_i$. Let $p_1,\ldots, p_{v+f-2} \subset R$ be a collection of disjoint simple arcs, whose interiors are disjoint from all curves $c_i$, with $\partial p_j \subset \bigcup_i c_i$ and such that each component of $R \setminus \left(\bigcup_i c_i \cup \bigcup_j p_j\right)$ is an annulus containing one unlabelled boundary component and one labelled boundary component. One may view the curves $c_i$ and arcs $p_j$ as specifying a planar graph, with vertices the curves $c_i$, faces the $b$-labelled boundary components, and edges the arcs $p_j$.

We call such a collection of simple curves and arcs up to isotopies in $R$ (i.e. up to the action of ${\rm Diff}_0^+(R)$), an \textbf{$a$-system}. We may construct a marked graph $G \in {\it thicken}^{-1}(R)$ with $G \subset R$ from an $a$-system via the transformation in Figure \ref{figmakegraph}.
\begin{figure}
\begin{center}
\includegraphics[scale=1]{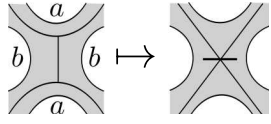}
\caption{Constructing a marked graph in ${\it thicken}^{-1}(R)$ from an $a$-system in Lemma \ref{lemgraphab}\ref{graphab1}.}\label{figmakegraph}
\end{center}
\end{figure}
Conversely, given a marked graph $G \subset R$, the inverse of the operation in Figure \ref{figmakegraph}, creates an $a$-system. The $\Omega_7$ move on marked graphs in $R$ corresponds to the slide move in Figure \ref{figsysmoves} on $a$-systems.
\begin{figure}
\begin{center}
\includegraphics[scale=1]{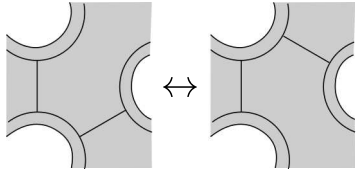}
\caption{The slide move for $a$-systems in $R$ in Lemma \ref{lemgraphab}\ref{graphab1}.}\label{figsysmoves}
\end{center}
\end{figure}
One can use slide moves to bring any $a$-system into a form where there exist two open intervals $U,V \subset c_1$ with $U \cap V = \emptyset$ such that $v-1$ of the paths have one endpoint in $U$ and one endpoint on $c_i$ for $1< i \leq v$ and the remaining $f-1$ paths have both endpoints adjacent to each other in $V$. The claim that any two such $a$-systems are related by slide moves can be deduced from Kamada \cite[Proposition 2.14]{Kam}.

Note that the set of $a$-systems up to the action of ${\rm Diff}_0^+(R)$ is infinite, if $R$ has four or more boundary components. However in this case ${\it thicken}^{-1}(R)$ may still be finite if $R$ admits many symmetries in $\bR^3$, say if $R \subset \bR^2$ (in which case all of the $a$-systems convert into one of finitely many marked graphs up to the action of ${\rm Diff}_0^+(\bR^3)$).

\item If $G_1$ is related to $G_2$ by an $\Omega_{6a}$ or $\Omega_{6b}$ move then $R_1$ and $R_2$ are related by $ab1$ and $ab3$ moves. If $G_1$ and $G_2$ are related by an $\Omega_7$ move then ${\it thicken}(G_1)={\it thicken}(G_2)$ and $R_1$ and $R_2$ are related by $ab3$ moves. If $G_1$ and $G_2$ are related by an $\Omega_8$ move then $R_1$ and $R_2$ are related by $ab2$ and $ab3$ moves.

\item We consider each $ab$-move separately. Assume an $\overrightarrow{ab1}$ move takes $R_1$ to $R_2$ and creates an $a$-labelled boundary component. Then one may obtain an $a$-system for $R_2$ from any $a$-system for $R_1$ by connecting an extra path to the new simple closed curve parallel to the new $a$-labelled boundary component. This corresponds to an $\overrightarrow{\Omega_{6a}}$ move. Thus $G_1$ and $G_2$ are related by $\Omega_7$ and $\Omega_{6a}$ moves. If the $ab1$ move involves a $b$-labelled boundary component the same reasoning may be used except with the dual notion of $b$-systems, and $G_1$ and $G_2$ will be related by $\Omega_7$ and $\Omega_{6b}$ moves.

Assume now $R_1$ and $R_2$ are related by an $ab2$ move. Any $ab2$ move determines two disjoint simple paths $p,q \subset R_1$ with $\partial p \subset \partial_a R_1$ and $\partial q \subset \partial_b R_1$. If either path cobounds with a segment of $\partial R_1$ an embedded disk in $R_1$, then the $ab2$ move can be realized as an equivalence of $ab$-surfaces in $\bR^3$. Thus assume neither path cobounds such a disk. First consider the case where $p,q$ are in different components of $R_1$. One can find an $a$-system in the component containing $p$ such that $p$ is one of the paths of the $a$-system and similarly one can find a $b$-system in the component containing $q$ such that $q$ is one of the paths of the $b$-system. By finding $a$ or $b$-systems in the remaining components of $R_1$, we obtain a marked graph $G_1' \in {\it thicken}(R_1)$ for which the $ab2$ move corresponds to an $\Omega_8$ move taking $G_1'$ to some $G_2' \in {\it thicken}(R_2)$. Thus $G_1$ and $G_2$ are related by $\Omega_7$ and $\Omega_8$ moves. Now consider the case where $p,q$ are in the same component of $R_1$. Since we assumed neither $p,q$ cobounds with $\partial R_1$ an embedded disk in $R_1$, the componet of $R_1$ containing $p,q$ must have at least two $a$-labelled and two $b$-labelled boundary components. We must find an $a$-system for this component of $R_1$, such that $p$ is one of the paths of the $a$-system and $q$ is one of the paths of the dual $b$-system (any $a$-system induces a unique dual $b$-system and vice versa). This amounts to finding an $a$-system for which $p$ is one of the paths of the $a$-system and $q$ transversely intersects some other path in this $a$-system (not $p$) exactly once. It is not difficult to see that any such path along with $p$, can be extended to an $a$-system so we are done. As before, we obtain a marked graph $G_1' \in {\it thicken}(R_1)$ for which the $ab2$ move corresponds to an $\Omega_8$ move taking $G_1'$ to some $G_2' \in {\it thicken}(R_2)$. Thus $G_1$ and $G_2$ are related by $\Omega_7$ and $\Omega_8$ moves.

Assume finally that $R_1$ and $R_2$ are related by an $ab3$ move. The $ab3$ move determines a simple compact interval $p$ in $R_1$ with one endpoint in $\partial_a R_1$ and the other in $\partial_b R_1$. We can readily find an $a$-system in $R_1$ for which all paths in the system are disjoint from $p$. This $a$-system gives rise to a marked graph $G_0$ for which $R_1,R_2 \in {\it thicken}(G_0)$. Thus $G_1$ and $G_2$ are related to $G_0$ by $\Omega_7$ moves and hence are related to each other by $\Omega_7$ moves. 

\item For the $ab1$ move this should be clear. For the $ab2$ and $ab3$ moves, the isotopies in $\bR^4$ connecting ${\it cap}(R_1)$ and ${\it cap}(R_2)$ are given in Figures \ref{figmovab2} and \ref{figmovab3}.
\begin{figure}
\begin{center}
\includegraphics[scale=1]{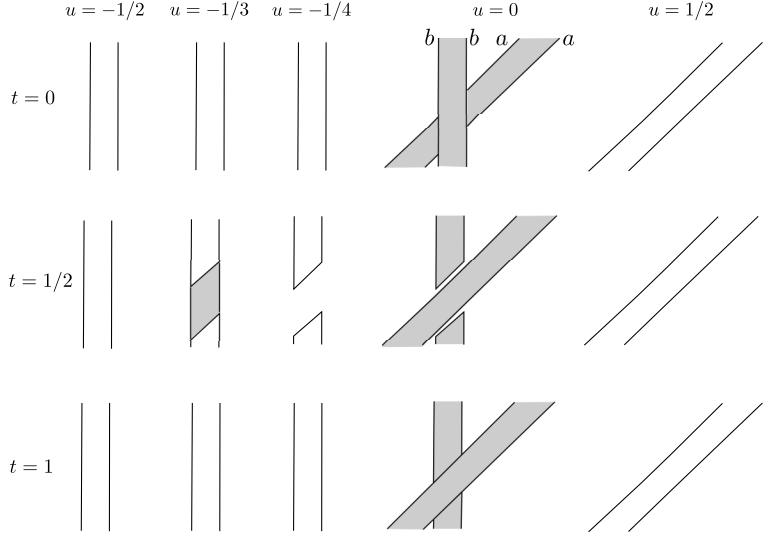}
\caption{An isotopy between ${\it cap}(R_1)$ and ${\it cap}(R_2)$ when $R_1$ and $R_2$ are related by an $ab2$ move.}\label{figmovab2}
\end{center}
\end{figure}
\begin{figure}
\begin{center}
\includegraphics[scale=1]{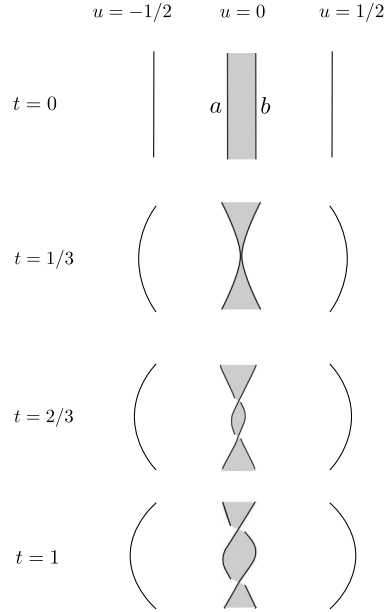}
\caption{An isotopy between ${\it cap}(R_1)$ and ${\it cap}(R_2)$ when $R_1$ and $R_2$ are related by an $ab3$ move.}\label{figmovab3}
\end{center}
\end{figure}
\end{enumerate}
\end{proof}

\begin{remark}
Lemma \ref{lemgraphab} might generalize to $ab$-surfaces in $\pS^+$, without restricting to $\pS_0$ in some instances as we have done.
\end{remark}

\section{A map from broken surface diagrams to $ab$-surfaces}\label{sectproof}

In Definition \ref{defperf} we describe the function ${\it perforate}$, assigning to any branch-free broken surface diagram an $ab$-surface. In Lemma \ref{lemperf} we prove some properties of the ${\it perforate}$ function and observe how it behaves when $Ro1$, $Ro2$, $Ro5^*$, $Ro7$, $Br1$ and $Br2$ moves are performed on the input. In Theorem \ref{thmmain} we prove the main result, that two marked graphs representing isotopic $2$-links are related by a sequence of ${\it Type \ II}$ Yoshikawa moves.

\begin{figure}
\begin{center}
\includegraphics[scale=1]{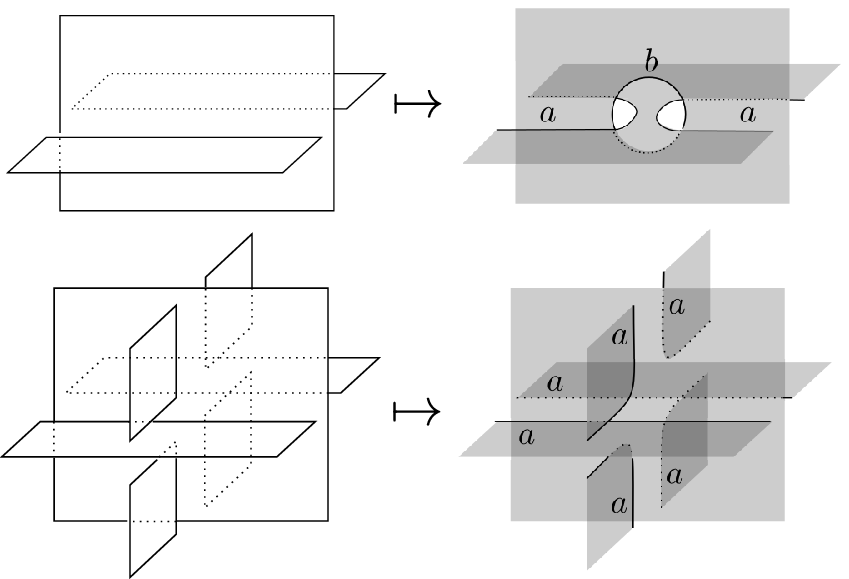}
\caption{The function ${\it perforate}:\pB^b(\pL) \rightarrow \pS$.}\label{figperf}
\end{center}
\end{figure}

\begin{definition}\label{defperf}
We define the function ${\it perforate}:\pB^b(\pL) \rightarrow \pS$ by the transformations in Figure \ref{figperf}, with a caveat. If the surface obtained via Figure \ref{figperf} has no $a$-labelled (resp. $b$-labelled) boundary components, we add arbitrarily a small $a$-labelled (resp. $b$-labelled) boundary component to fulfil the second property of Definition \ref{defab}. However we will often not bother to draw these extra components.
\tqed
\end{definition}

\begin{figure}
\begin{center}
\includegraphics[scale=1]{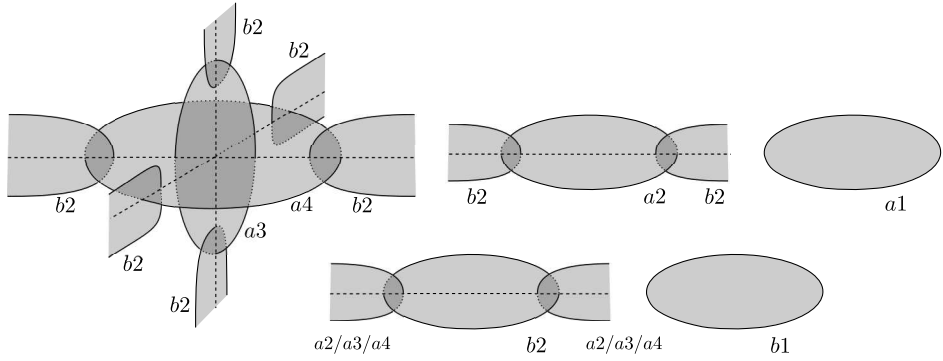}
\caption{Types of disks in the proof of Lemma \ref{lemperf}\ref{perf1} and \ref{lemperf}\ref{perf2}. Note that disks of type $a3$ and $a4$ are dependent and can only occur together.}\label{figtypes}
\end{center}
\end{figure}

\begin{remark}\label{remperf}
If $L \in \pL$ is such that $\pi(L) \in \pB^b(\pL)$, the set ${\it sing}(\pi(L)) \subset \bR^3$ is an embedded $6$-regular graph, possibly with some components having no vertices. Each edge or closed component gives rise to one $b$-labelled boundary component in ${\it perforate}(\pi(L))$. The surface ${\it perforate}(\pi(L))$ is indeed an $ab$-surface. The $b$-labelled boundary $\partial_b({\it perforate}(\pi(L)))$ forms an unlink and the existence of a $b$-labelled boundary component at each edge forces $\partial_a({\it perforate}(\pi(L)))$ to form an unlink as well. The isotopy class ${\it cap}({\it perforate}(\pi(L)))$ agrees with the isotopy class of $L$. Note also that the $ab$-surface ${\it perforate}(D)$ for any $D \in \pB^b(\pL)$ induces the $0$-framing on each of its boundary components.
\tqed
\end{remark}

\begin{lemma}\label{lemperf}
\begin{enumerate}[a)]
\item\label{perf1} If $R \in \pS$ and there exist systems $\{D_a^i\}_i$ and $\{D_b^j\}_j$ of disks in $\bR^3$ such that:
\begin{itemize}
\item[-] $\cup_i D_a^i=\partial_a R$ and $\cup_j D_b^j=\partial_b R$,
\item[-] each disk is embedded and is of one of the types in Figure \ref{figtypes}, based on its intersections with other disks and $R$ (which is not depicted in the figure),
\end{itemize}
then $R$ is related by $ab$-moves to a surface of the form ${\it perforate}(D)$ for some $D \in \pB^b(\pL)$.
\item\label{perf2} If $R \in \pS_0$ then $R$ is related by $ab$-moves to an $ab$-surface of the form ${\it perforate}(D)$ for some $D \in \pB^b(\pL_0)$.
\item\label{perf3} If two broken surface diagrams $D,D' \in \pB^b(\pL)$ are related by an $Ro1$, $Ro2$, $Ro5^*$, $Ro7$, $Br1$ or $Br2$ move then ${\it perforate}(D)$ and ${\it perforate}(D')$ are related by $ab$-moves.
\end{enumerate}
\end{lemma}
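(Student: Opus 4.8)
The plan is to prove the three parts in order, deducing (\ref{perf2}) from (\ref{perf1}) and handling (\ref{perf3}) as an independent move-by-move check. For (\ref{perf1}) I would run the construction defining ${\it perforate}$ in reverse. Placing $R$ in $\bR^3 = \bR^3 \times \{0\}$ and the capping systems in $\bR^3 \times \{1\}$ and $\bR^3 \times \{-1\}$, I would flatten the $a$-disks down and the $b$-disks up into $\bR^3$, recording the $u$-ordering as crossing information. The hypothesis that every disk has one of the types in Figure \ref{figtypes} is precisely what guarantees that the resulting projection $D$ is a \emph{branch-free} broken surface diagram: each type contributes a standard local model with only double curves and triple points, no folds, and the dependent pair $a3$/$a4$ supplies the only configuration in which two disks must be flattened together. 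The remaining work is the local verification that gluing these models yields a well-defined $D \in \pB^b(\pL)$ and that ${\it perforate}(D)$ returns $R$ up to $ab$-moves, the slack coming from the choices made in the reverse construction (the same sort of gluing freedom that renders ${\it thicken}$ well-defined only up to $ab3$).

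For (\ref{perf2}) I would first normalize $R \in \pS_0$ by $ab$-moves into a standard form that visibly admits capping disks of the types required by (\ref{perf1}), and then invoke (\ref{perf1}). Because each component of $R$ is planar and $\partial_a R, \partial_b R$ are unlinks, I can organize $R$ using an $a$-system as in Lemma \ref{lemgraphab}\ref{graphab1}; sliding the system into the standard position described there reduces the disk-placement problem to a fixed model, where the spanning disks for the $a$- and $b$-boundaries can be chosen with intersections falling into the allowed types (paying for any excess intersections with the paired $a3$/$a4$ disks). The output diagram lies in $\pB^b(\pL_0)$ rather than merely $\pB^b(\pL)$ because capping a genus-$0$ orientable surface off above and below produces spheres.

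For (\ref{perf3}) I would treat each of $Ro1$, $Ro2$, $Ro5^*$, $Ro7$, $Br1$ and $Br2$ separately. Each such move is supported in a ball, so ${\it perforate}$ changes only inside the corresponding ball, and I would exhibit the change of $ab$-surface there as a composite of $ab$-moves: $ab3$ for the moves that only rearrange the double-curve pattern without altering the combinatorial type of the singular graph, and $ab1$ or $ab2$ for the moves that create, destroy or merge edges of the singular set and hence boundary components of the $ab$-surface. This part is routine but figure-heavy.

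The step I expect to be the main obstacle is (\ref{perf2}): producing, for a general $R \in \pS_0$, a normal form together with capping disks of \emph{exactly} the types in Figure \ref{figtypes}, rather than arbitrary spanning disks. Controlling the intersection pattern — in particular respecting the forced pairing of $a3$ and $a4$ — is the delicate point, with the recovery claim ``${\it perforate}(D) \sim_{ab} R$'' in (\ref{perf1}) a close second.
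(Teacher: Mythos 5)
Your outline for parts \ref{perf1} and \ref{perf3} matches the paper's proof in spirit. In \ref{perf1} the hypothesis already places the disks in $\bR^3$ (not at heights $\pm 1$), so no flattening is needed: the union $D=R\cup\bigcup_i D_a^i\cup\bigcup_j D_b^j$ is itself a generic branch-free projection, and the ``slack'' you attribute to gluing freedom is in fact something concrete — the elimination of the boundary components of $R$ sitting at disks of type $a1$, $b1$ and $a2$ by $\overleftarrow{ab1}$ and $ab2$ moves. Part \ref{perf3} is indeed a figure-by-figure check, with $Ro7$ (merging all $b$-labelled boundary components of the bottom sheet, pushing it past the triple point, then restoring them) and $Br2$ (checked via an equivalent move) needing more than your proposed dichotomy of $ab3$ versus $ab1/ab2$. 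The genuine gap is in part \ref{perf2}, which you correctly identify as the crux but for which your proposed mechanism would fail. You plan to normalize $R$ using the $a$-system normal form of Lemma \ref{lemgraphab}\ref{graphab1} and then place capping disks in a ``fixed model.'' But the $a$-system lives \emph{inside} $R$, and slide moves on it do not change the embedding $R\subset\bR^3$ at all: after the normalization, $R$ is the same, possibly very complicated, embedded surface, and the problem of finding spanning disks for $\partial_a R$ and $\partial_b R$ whose intersections with $R$ and with each other fall into the types of Figure \ref{figtypes} is exactly as hard as before. An intrinsic normal form cannot control an extrinsic intersection pattern. The paper's mechanism is the opposite: take \emph{arbitrary} capping disks in $\bR^3\times\{\pm 1\}$ as in the definition of ${\it cap}$, project them to $\bR^3$, and then tame their intersections by performing explicit $ab$-moves on $R$ itself — $\overrightarrow{ab1}$ moves to cut closed intersection curves, $ab2$ moves to split disks meeting $R$ in several arcs, shrinking of the $a$-labelled boundary components, and insertion of new $a$-labelled boundary components at triple points (Figures \ref{figadjust}, \ref{figadjust2}, \ref{figadjust3}) — until every component of $D_a$ meets $R\cup D_b$ as in Figure \ref{figDa}, after which part \ref{perf1} applies. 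That taming procedure is the missing content of your proof.

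A second omission: framings. By Remark \ref{remperf}, any surface of the form ${\it perforate}(D)$ induces the $0$-framing on all of its boundary components, so this is a necessary condition on $R$, and it is also what allows the projected capping disks to be arranged to meet $R$ only in interior points. The paper's first step in \ref{perf2} is to achieve $0$-framings using $ab3$ moves, and this step is itself nontrivial: it uses Lemma \ref{lemgraphab}\ref{graphab1} to realize $R$ as ${\it thicken}(G)$ for a marked graph $G\subset R$, and a planarity argument to show that once all but one framing are corrected, the last one is automatically $0$. Your proposal never mentions framings, and without this step the reduction to part \ref{perf1} cannot get started.
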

\begin{proof}
\begin{enumerate}[a)]
\item Each boundary component of the $ab$-surface $R$ is an unknot and has an induced framing from the embedding $R \subset \bR^3$. Due to the way $R$ intersects the described systems of disks, it must be the case that $R$ induces the $0$-framing on each of its boundary components. The union $D=R \cup \bigcup_i D_a^i \cup \bigcup_j D_b^j$ is a generic projection with no branch points. The surface $R$ is nearly in the form ${\it perforate}(D)$, we must only eliminate all boundary components bounding disks of type $a1$, $b1$ or $a2$. If there are disks of type $a1$ or $b1$, then those boundary components of $R$ should be eliminated with $\overleftarrow{ab1}$ moves. If there are disks of type $a2$ then those boundary components may be eliminated with $ab2$ and $\overleftarrow{ab1}$ moves. Once this has been done, we have $R={\it perforate}(D)$.

\item First we show that we can use $ab3$ moves to ensure each boundary component of $R$ is $0$-framed. It is enough to prove this when $R$ is connected. By the second property of Definition \ref{defab}, $R$ has at least one $a$-labelled and one $b$-labelled boundary component. We can use $ab3$ moves to ensure that all framings are $0$ except for one $a$-labelled boundary component. By Lemma \ref{lemgraphab}\ref{graphab1} there is $G \in \pM$ with $G \subset R$ and $thicken(G)=R$. The graph $G$ is planar, and we can unknot it to some plane graph in $\bR^2$, while preserving the framings. In this form it is easy to see that this final $a$-labelled boundary component must automatically be $0$-framed, thus it must be $0$-framed in $R$ as well.

Let $\{E_a^i\}_i \subset \bR^3 \times \{1\}$ and $\{E_b^j\}_j \subset \bR^3 \times \{-1\}$ be systems of disks as in the definition of the ${\it cap}$ function, for the surface $R$. Let $D_a^i=\pi(E_a^i)$ and $D_b^j=\pi(E_b^j)$. Note that $D_a=\cup_i D_a^i$ is a disjoint union of embedded disks in $\bR^3$, as is $D_b=\cup_j D_b^j$. Since each component of $\partial R$ is $0$-framed, we may assume that $R \cap D_a$ and $R \cap D_b$ are contained in the interiors of $D_a$ and $D_b$. We perturb $D_a$ and $D_b$ so that $R \cap D_a$, $R \cap D_b$ and $D_a \cap D_b$ are transverse in $\bR^3$. We also assume that $\left(R \cap D_a\right) \cap \left(D_a \cap D_b\right)$ is transverse in $D_a$, etc. Now with such a system of disks, $R \cup D_a \cup D_b \subset \bR^3$ is a generic projection of ${\it cap}(R)$ with no branch points. Ultimately, we would like to find systems of disks as in Lemma \ref{lemperf}\ref{perf1}.
\begin{figure}
\begin{center}
\includegraphics[scale=1]{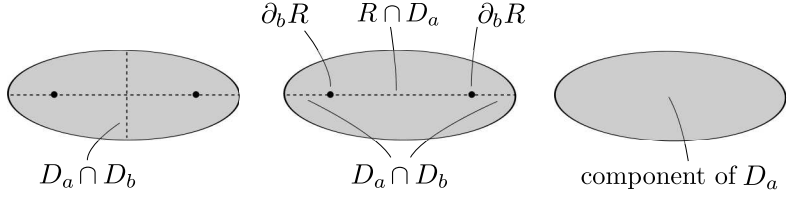}
\caption{Three desirable possibilities for a component of $D_a$ intersecting $R \cup D_b$ in Lemma \ref{lemperf}\ref{perf1}.}\label{figDa}
\end{center}
\end{figure}

Our next goal is to make a series of adjustments to $R$, so that each component of $D_a$ intersects $R \cup D_b$ in one of the three ways in Figure \ref{figDa}. We may use $\overrightarrow{ab1}$ moves to add extra $b$-labelled boundary components so that $R \cap D_a$ contains no closed curves. Specifically, for each closed curve in this intersection we create a $b$-labelled boundary component in a neighbourhood of some point on the curve. The closed curve then is replaced with a compact interval, as for example in the first row of Figure \ref{figadjust}. We may use $\overrightarrow{ab1}$ moves to add extra $a$-labelled boundary components, followed by $ab2$ moves so that each disk in $D_a$ intersects $R$ in exactly one compact interval, as in the second row of Figure \ref{figadjust}. We make further adjustments by shrinking each boundary component in $\partial_a R=\partial D_a$ small enough (via an equivalence in $\bR^3$ of the surface $R$), so that each component of $D_a$ intersects $R \cup D_b$ as in the top-left of Figure \ref{figadjust2}, possibly with more transverse intersections from $\left(R \cap D_a\right) \cap \left(D_a \cap D_b\right)$ (the figure depicts two such intersections). We then may use a series of $ab1$ and $ab2$ moves as in Figure \ref{figadjust2}, to achieve our stated goal of having each component of $D_a$ intersect $R \cup D_b$ in one of the three ways in Figure \ref{figDa}.
\begin{figure}
\begin{center}
\includegraphics[scale=1]{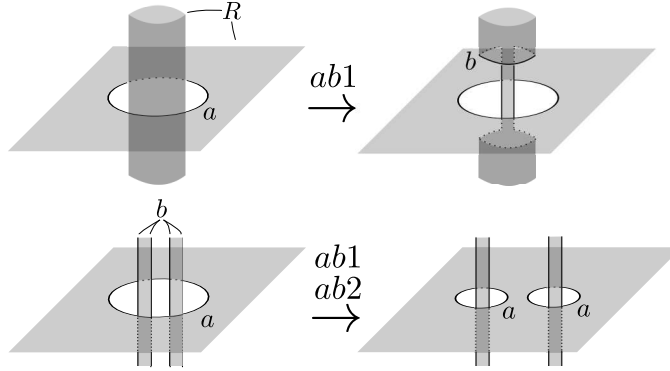}
\caption{Adjusting $R$ in the proof of Lemma \ref{lemperf}\ref{perf1}.}\label{figadjust}
\end{center}
\end{figure}
\begin{figure}
\begin{center}
\includegraphics[scale=1]{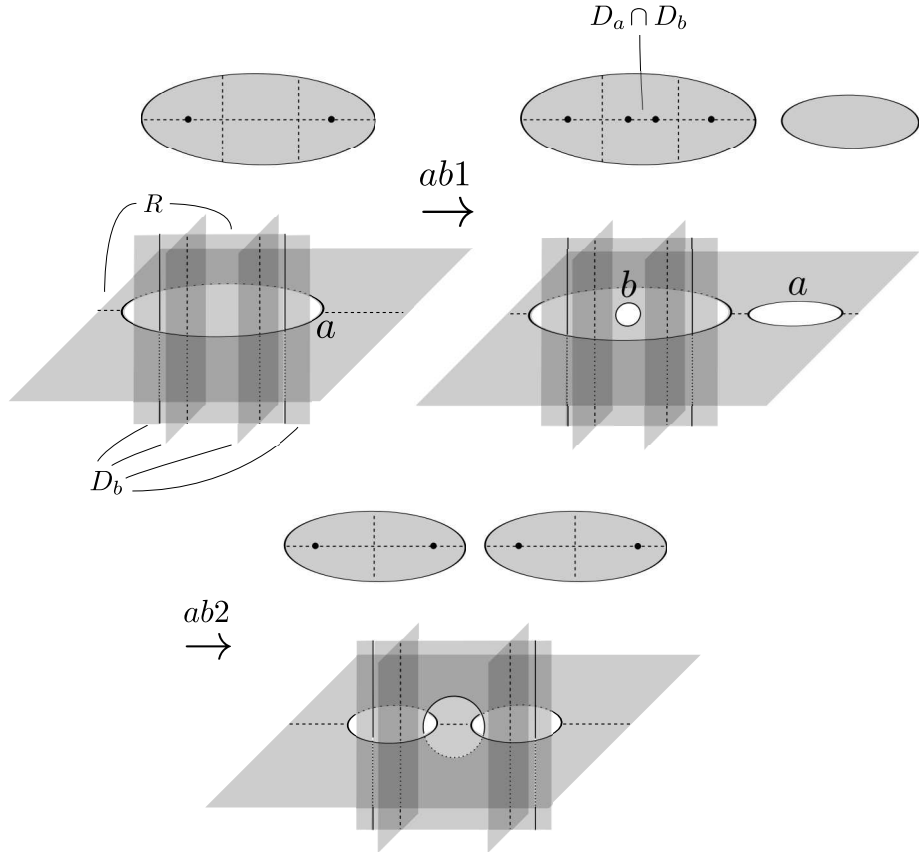}
\caption{Further adjustments of $R$ in Lemma \ref{lemperf}\ref{perf1}, so that each component of $D_a$ intersects $R \cup D_b$ in one of the three ways in Figure \ref{figDa}.}\label{figadjust2}
\end{center}
\end{figure}

The union $R \cup D_a \cup D_b$ remains a generic projection with no branch points, and the disk systems $D_a$ and $D_b$ remain embedded in $\bR^3$. Each triple point of $R \cup D_a \cup D_b$ arises from a component of $D_a$ as in the top-left of Figure \ref{figDa}. We perform $ab1$ moves to add an $a$-labelled boundary component at each triple point, as in Figure \ref{figadjust3}. By now, for each $j$ we have that $R \cap D_b^j$ is a disjoint union of simple closed curves and compact intervals with endpoints in $\partial_a R$. Each compact interval in $R \cap D_b^j$ is properly contained in an edge of ${\it sing}(R \cup D_a \cup D_b)$ and each simple closed curve in $R \cap D_b^j$ is disjoint from any triple point in ${\it sing}(R \cup D_a \cup D_b)$ and remains a split simple closed curve in ${\it sing}(R \cup D_a \cup D_b)$. We repeat the steps of Figure \ref{figadjust} for the disks $D_b^j$ with the roles of $a$ and $b$ reversed. At this point, the systems $\{D_a^i\}_i$ and $\{D_b^j\}_j$ of disks satisfy the conditions of Lemma \ref{lemperf}\ref{perf1} and we are done.

\begin{figure}
\begin{center}
\includegraphics[scale=1]{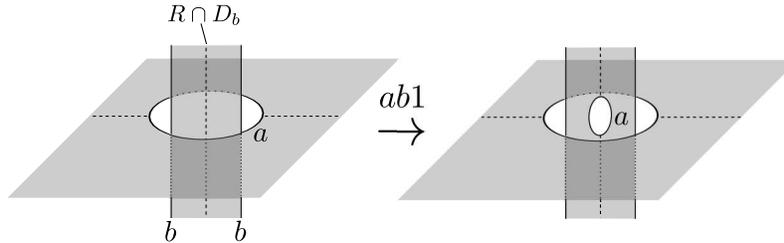}
\caption{Adding an $a$-labelled boundary component at a triple point.}\label{figadjust3}
\end{center}
\end{figure}

\item We must check that each of the mentioned moves on branch-free broken surface diagrams can be realized by $ab$-moves on their images under the ${\it perforate}$ function. See Figures \ref{figcheckro1}, \ref{figcheckro2}, \ref{figcheckro5} and \ref{figcheckbr2} for the $Ro1$, $Ro2$, $Ro5^*$ and $Br2$ moves. The reader should verify that the transitions in each figure are achievable by $ab$-moves and equivalences in $\bR^3$. For the $Ro7$ move, Figure \ref{figcheckro7} shows the necessary changes to the bottommost sheet, which will have only $b$-labelled boundary components. One may use $ab1$ and $ab2$ moves to merge all $b$-labelled boundary components into a single boundary component, the sheet can then be pushed via an equivalence in $\bR^3$ past the triple point (or where it normally would be), and then the $b$-labelled boundary components can be restored with $ab1$ and $ab2$ moves. In the figure, the dashed line indicates where the other three sheets would intersect the bottommost sheet in the broken surface diagram.

Instead of the $Br2$ move, in Figure \ref{figcheckbr1} we check a move that is equivalent to $Br2$ modulo $Ro1$, $Ro2$, $Ro5^*$, $Ro7$ and $Br1$ moves.
\end{enumerate}
\end{proof}

\begin{figure}
\begin{center}
\includegraphics[scale=1]{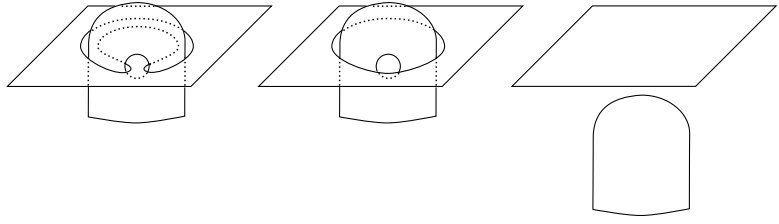}
\caption{Checking the $Ro1$ move in Lemma \ref{lemperf}\ref{perf3}.}\label{figcheckro1}
\end{center}
\end{figure}
\begin{figure}
\begin{center}
\includegraphics[scale=1]{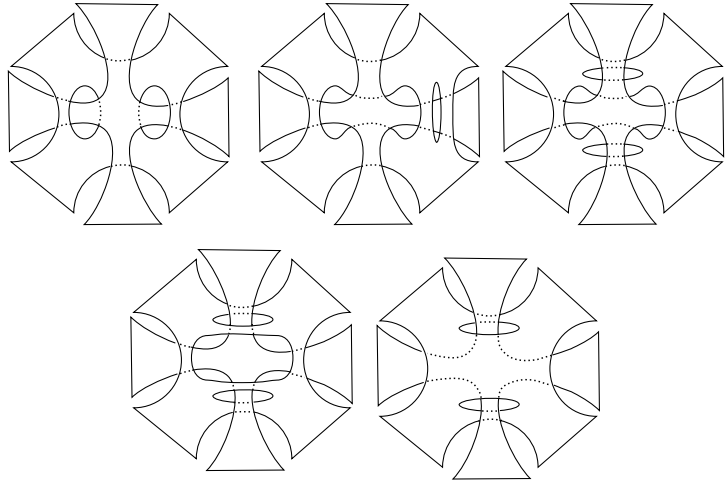}
\caption{Checking the $Ro2$ move in Lemma \ref{lemperf}\ref{perf3}.}\label{figcheckro2}
\end{center}
\end{figure} 

\begin{figure}
\begin{center}
\includegraphics[scale=1]{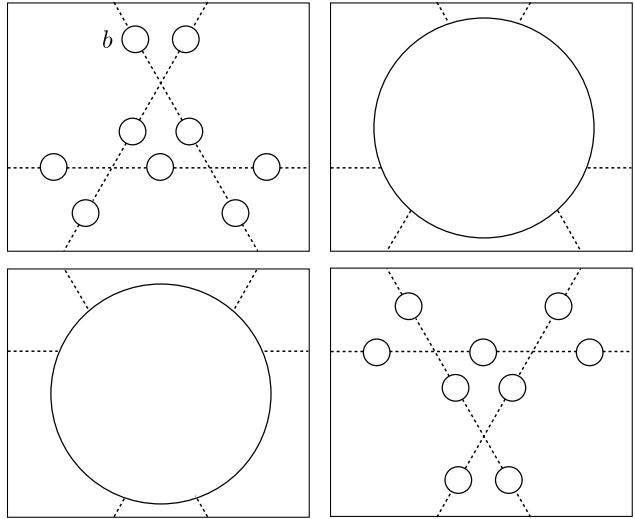}
\caption{Checking the $Ro7$ move in Lemma \ref{lemperf}\ref{perf3}.}\label{figcheckro7}
\end{center}
\end{figure}

\begin{figure}
\begin{center}
\includegraphics[scale=1]{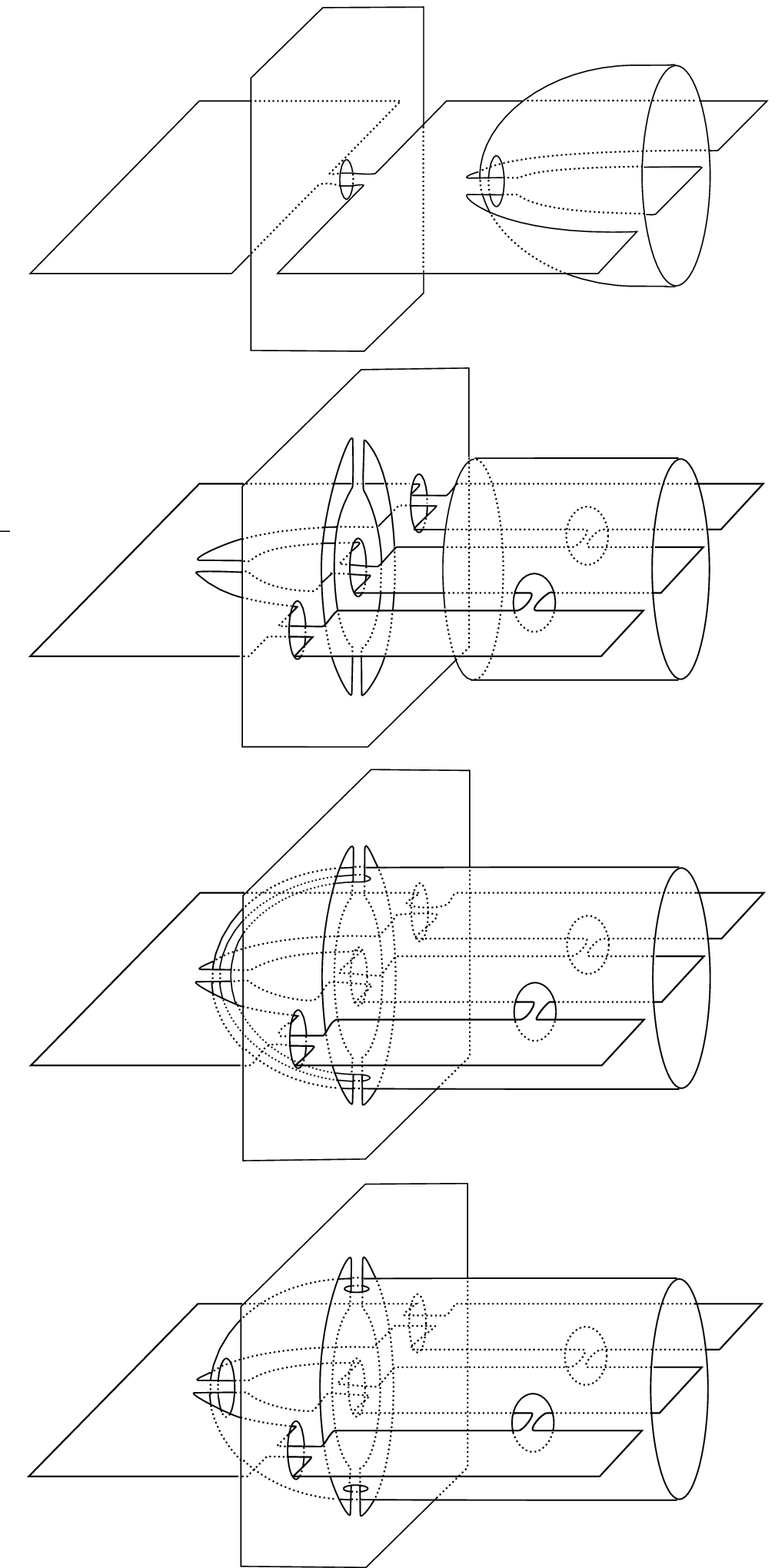}
\caption{Checking the $Ro5^*$ move in Lemma \ref{lemperf}\ref{perf3}.}\label{figcheckro5}
\end{center}
\end{figure}

\begin{figure}
\begin{center}
\includegraphics[scale=1]{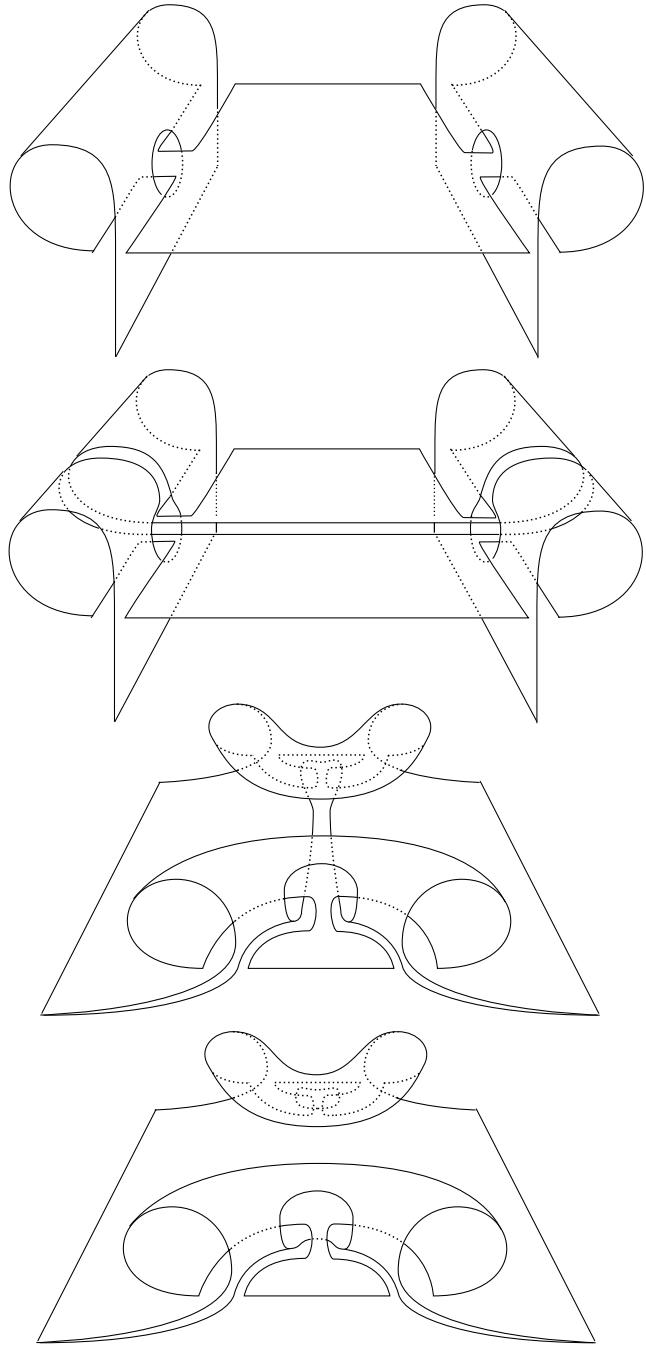}
\caption{Checking the $Br1$ move in Lemma \ref{lemperf}\ref{perf3}.}\label{figcheckbr1}
\end{center}
\end{figure}
\begin{figure}

\begin{center}
\includegraphics[scale=1]{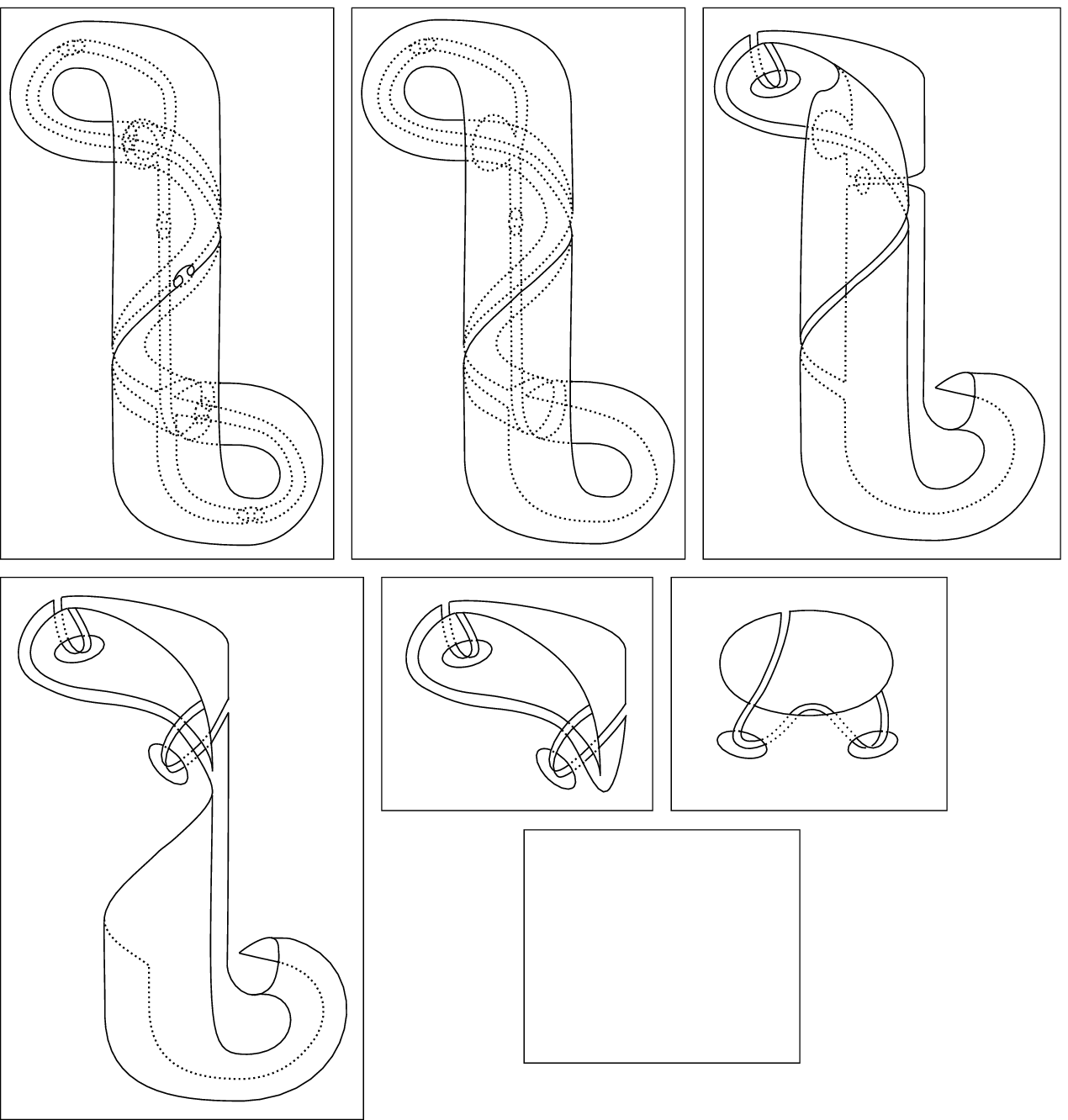}
\caption{Checking a move equivalent to the $Br2$ move in Lemma \ref{lemperf}\ref{perf3}.}\label{figcheckbr2}
\end{center}
\end{figure}

\begin{theorem}\label{thmmain}
If $G_1,G_2 \in \pM$ are such that ${\it cap}({\it thicken}(G_1))$ and ${\it cap}({\it thicken}(G_2))$ are isotopic $2$-links, then $G_1$ and $G_2$ are related by a sequence of ${\it Type \ II}$ Yoshikawa moves.
\end{theorem}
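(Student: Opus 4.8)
The plan is to assemble the three transfer mechanisms already built --- between marked graphs and $ab$-surfaces (Lemma~\ref{lemgraphab}), between $ab$-surfaces and branch-free broken surface diagrams (the ${\it perforate}$ function, Lemma~\ref{lemperf}), and the branch-free form of Roseman's theorem (Theorem~\ref{thmbranch}) --- into a single chain that converts the isotopy hypothesis on the capped $2$-links into a sequence of ${\it Type \ II}$ Yoshikawa moves on $G_1$ and $G_2$.

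First I would fix representatives $R_i \in {\it thicken}(G_i)$ for $i=1,2$. Since by hypothesis ${\it cap}({\it thicken}(G_i))$ is a $2$-link, each component of $R_i$ must be an orientable genus-$0$ surface: capping a component of genus $g$ with $n$ boundary circles yields a closed surface of Euler characteristic $2-2g$, which equals $2$ only when $g=0$, and non-orientability of $R_i$ would be inherited by the cap. Hence $R_i \in \pS_0$, and $G_i \in {\it thicken}^{-1}(R_i)$ by construction. Applying Lemma~\ref{lemperf}\ref{perf2}, each $R_i$ is related by $ab$-moves to a surface ${\it perforate}(D_i)$ for some $D_i \in \pB^b(\pL_0)$.

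The crucial step is to recast the hypothesis as an isotopy of the diagrams $D_1,D_2$. By Lemma~\ref{lemgraphab}\ref{graphab4} the ${\it cap}$ function is $ab$-move invariant, so ${\it cap}(R_i) = {\it cap}({\it perforate}(D_i))$, and by Remark~\ref{remperf} this last class is exactly the isotopy class of the $2$-link presented by $D_i$. Therefore ${\it cap}({\it thicken}(G_i))$ coincides with the link of $D_i$, and the assumed isotopy forces $D_1$ and $D_2$ to present isotopic $2$-links. Roseman's theorem (Theorem~\ref{thmrose}) then relates them by a finite sequence of Roseman moves, and since $D_1,D_2 \in \pB^b(\pL_0)$, Theorem~\ref{thmbranch} upgrades this to a sequence of $Ro1$, $Ro2$, $Ro5^*$, $Ro7$, $Br1$ and $Br2$ moves.

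Finally I would push this back through ${\it perforate}$: by Lemma~\ref{lemperf}\ref{perf3}, ${\it perforate}(D_1)$ and ${\it perforate}(D_2)$ are related by $ab$-moves. Concatenating the three chains
\[
R_1 \xrightarrow{\ ab\ } {\it perforate}(D_1) \xrightarrow{\ ab\ } {\it perforate}(D_2) \xrightarrow{\ ab\ } R_2
\]
(the outer two from Lemma~\ref{lemperf}\ref{perf2}, the middle from Lemma~\ref{lemperf}\ref{perf3}) shows that $R_1,R_2 \in \pS_0$ are related by $ab$-moves. Since $G_i \in {\it thicken}^{-1}(R_i)$, Lemma~\ref{lemgraphab}\ref{graphab3} delivers a sequence of ${\it Type \ II}$ Yoshikawa moves relating $G_1$ and $G_2$, as desired. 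I expect no genuine obstacle beyond careful bookkeeping, since every individual transfer is already in hand; the one point demanding attention is the identification in the third paragraph, where one must verify that the four isotopy classes ${\it cap}({\it thicken}(G_i))$, ${\it cap}(R_i)$, ${\it cap}({\it perforate}(D_i))$ and the link of $D_i$ all agree, so that the hypothesis legitimately furnishes isotopic diagrams for Roseman's theorem.
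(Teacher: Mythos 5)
Your proposal is correct and follows essentially the same route as the paper's own proof: thicken, apply Lemma~\ref{lemperf}\ref{perf2} to reach ${\it perforate}(D_i)$, invoke Roseman's theorem and Theorem~\ref{thmbranch}, push back through Lemma~\ref{lemperf}\ref{perf3}, and finish with Lemma~\ref{lemgraphab}\ref{graphab3}. Your two added verifications --- that $R_i \in \pS_0$ via the Euler characteristic argument, and that the isotopy classes ${\it cap}({\it thicken}(G_i))$, ${\it cap}({\it perforate}(D_i))$ and the link of $D_i$ agree via Lemma~\ref{lemgraphab}\ref{graphab4} and Remark~\ref{remperf} --- are points the paper leaves implicit, and spelling them out is sound.
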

\begin{proof}
Let $R_1 \in {\it thicken}(G_1)$ and $R_2 \in {\it thicken}(G_2)$. By Lemma \ref{lemperf}\ref{perf2} there exists an $ab$-surface $R_i'$ related by $ab$-moves to $R_i$ so that $R_i'$ is of the form ${\it perforate}(D_i)$ for some $D_i \in \pB^b(\pL_0)$ for $i=1,2$. Since $D_1$ represents a $2$-link isotopic to the $2$-link represented by $D_2$, by Roseman's theorem there is a sequence of Roseman moves from $D_1$ to $D_2$. By Theorem \ref{thmbranch} there is a sequence of $Ro1$, $Ro2$, $Ro5^*$, $Ro7$, $Br1$ and $Br2$ moves taking $D_1$ to $D_2$. By Lemma \ref{lemperf}\ref{perf3} there is a sequence of $ab$-moves taking $R_1'$ to $R_2'$. Thus there is a sequence of $ab$-moves taking $R_1$ to $R_2$. By Lemma \ref{lemgraphab}\ref{graphab3} there is a sequence of ${\it Type \ II}$ Yoshikawa moves taking $G_1$ to $G_2$.

Note also that if we are presented with marked graph diagrams of $G_1$ and $G_2$, then the marked graph diagrams are related by a sequence of ${\it Type \ I}$ and ${\it Type \ II}$ Yoshikawa moves, by the above and the results of Kauffman \cite{Kau} for diagrams of $4$-regular rigid vertex spatial graphs.
\end{proof}

\clearpage

\end{document}